\def\subsection{\@startsection{subsection}{2}%
  \z@{.5\linespacing\@plus.7\linespacing}{-.5em}%
  {\normalfont\bfseries\boldmath}}
\let\ve=\mathbf
\renewcommand{\c}{\mathbf{c}}
\renewcommand{\a}{\mathbf{a}}
\renewcommand{\b}{\mathbf{b}}
\newcommand{\LL}{\mathcal{L}}
\newcommand{\integers}{\mathbb{Z}}
\newcommand{\rationals}{\mathbb{Q}}
\newcommand{\reals}{\mathbb{R}}
\newcommand{\CC}{\mathcal{C}}
\newcommand{\KK}{\mathcal{K}}
\newcommand{\RR}{\mathcal{R}}
\newcommand{\PP}{\mathcal{P}}
\newcommand{\s}{\mathbf{s}}
\renewcommand{\u}{\mathbf{u}}
\newcommand{\la}{\lambda}
\DeclareMathOperator\lcm{lcm}
\newcommand{\todo}[1]{\par \smallskip\noindent
  \framebox{\begin{minipage}[c]{0.95 \textwidth} \ttfamily \sloppy TO DO:
      #1 \end{minipage}}\par\smallskip}
\newcommand{\note}[1]{\par \smallskip\noindent
  \framebox{\begin{minipage}[c]{0.95 \textwidth} \ttfamily NOTE:
      #1 \end{minipage}}\par\smallskip}
\newcommand{\commentout}[1]{}
\newtheorem{theorem}{Theorem}[section]
\newtheorem{lemma}[theorem]{Lemma}
\newtheorem{proposition}[theorem]{Proposition}
\newtheorem{corollary}[theorem]{Corollary}
\newtheorem{conjecture}[theorem]{Conjecture}
\theoremstyle{definition}
\newtheorem{definition}[theorem]{Definition}
\theoremstyle{remark}
\newtheorem{remark}[theorem]{Remark}
\title[On $\s$-Lecture Hall Partitions]{$\s$-Lecture Hall Partitions, Self-Reciprocal Polynomials, and Gorenstein Cones}
\author[M. Beck]{
Matthias Beck
}
\address{Department of Mathematics\\
         San Francisco State University\\
         San Francisco, CA 94132}
\email{mattbeck@sfsu.edu}
\author[B. Braun]{
Benjamin Braun
}
\address{Department of Mathematics\\
         University of Kentucky\\
         Lexington, KY 40506--0027}
\email{benjamin.braun@uky.edu}
\author[M. K\"oppe]{
Matthias K\"oppe
}
\address{Department of Mathematics\\
         University of California, Davis\\
         One Shields Avenue\\
         Davis, CA 95616}
\email{mkoeppe@math.ucdavis.edu}
\author[C. Savage]{
Carla D. Savage
}
\address{Department of Computer Science\\
         North Carolina State University \\
         Raleigh, NC 27695-8206}
\email{savage@ncsu.edu}
\author[Z. Zafeirakopoulos]{
 Zafeirakis Zafeirakopoulos
}
\address{Research Institute for Symbolic Computation\\
Johannes Kepler University\\
Altenberger Strasse 69\\
A-4040 Linz, Austria}
\email{zafeirakopoulos@risc.jku.at}
\thanks{
Matthias Beck was partially supported by grant DMS-1162638 of the U.S.\ National Science Foundation.
Benjamin Braun was partially supported by grant H98230-13-1-0240 of the U.S.\ National Security Agency.
Matthias K\"oppe was partially supported by grant DMS-0914873 of the U.S.\ National Science Foundation.
The work was partially supported by a grant from the Simons Foundation (\# 244963 to Carla Savage).
Zafeirakis Zafeirakopoulos was supported by the strategic program ``Innovatives O\"O 2010 plus'' by the Upper Austrian Government and by the Austrian Science Fund (FWF): W1214-N15, project DK6.
The authors thank the American Institute of Mathematics for support of our SQuaRE working group on ``Polyhedral Geometry and Partition Theory.'' 
We are deeply grateful to a referee who took time and care and did a thorough job of checking the paper. In the end (s)he understood the paper better than we did.
}
\subjclass[2010]{05A17, 05A19, 52B11, 13A02, 13H10 
}
\begin{document}

\begin{abstract}
In 1997, Bousquet--M\'elou and Eriksson initiated the study of \emph{lecture hall partitions}, a fascinating family of partitions that yield a finite version of Euler's celebrated odd/distinct partition theorem.
In subsequent work on $\s$-lecture hall partitions, they considered the \emph{self-reciprocal property} for various associated generating functions, with the goal of characterizing those sequences $\s$ that give rise to generating functions of the form $((1-q^{e_1})(1-q^{e_2}) \cdots (1-q^{e_n}))^{-1}$.

We continue this line of investigation, connecting their work to the more general context of Gorenstein cones.
We focus on the Gorenstein condition for $\s$-lecture hall cones when $\s$ is a positive integer sequence generated by a second-order homogeneous linear recurrence with initial values $0$ and $1$.
Among such sequences $\s$, we prove that the $n$-dimensional $\s$-lecture hall cone is Gorenstein for all $n \geq 1$ if and only if $\s$ is an $\ell$-sequence, i.e.,
recursively defined through $s_0=0$, $s_1=1$, and $s_{i}=\ell s_{i-1}-s_{i-2}$ for $i \ge 2$.
One consequence is that among such sequences $\s$, unless $\s$ is an $\ell$-sequence, the generating function for the $\s$-lecture hall partitions can have
the form $((1-q^{e_1})(1-q^{e_2}) \cdots (1-q^{e_n}))^{-1}$ for at most finitely many $n$.

We also apply the results to   establish several conjectures by Pensyl and Savage regarding the symmetry of $h^*$-vectors for $\s$-lecture hall polytopes.
We end with open questions and directions for further research.
\end{abstract}

\maketitle

\tableofcontents


\section{Overview}
We will use polyhedral geometry to make progress on some open questions in the theory of {\em lecture hall partitions}.

\subsection{Lecture hall partitions and generating functions}
For a sequence $\s = \{s_i\}_{i \geq 1}$ of positive integers,
let $\LL_n^{(\s)}$ denote the set of all \emph{$\s$-lecture hall partitions} of length $n$,
\begin{equation}
\LL_n^{(\s)}=\left \{\lambda\in\integers^n:0 \leq \frac{\la_{1}}{{s_1}}\leq \frac{\la_{2}}{{s_2}} \leq \cdots\leq \frac{\la_{n}}{{s_n}}\right \} .
\label{lhpdef}
\end{equation}
When $\s$ is nondecreasing, each $\la \in \LL_n^{(\s)}$ satisfies
$\la_1 \leq \la_2 \leq \cdots \le \la_n$ and can be regarded as a {\em partition} of the integer $|\la| = \la_1 + \cdots + \la_n$.  But when $\s$ in not nondecreasing, the $\s$-lecture hall partitions are not necessarily {partitions}.
For example,
if $\s=(1,9,3,4)$ then $(0,3,1,2)$ and $(0,2,1,3)$ are distinct elements of $\LL_4^{(1,9,3,4)}$, even though they represent the same partition of the integer 6. 
In \cite{BME1} and \cite{BME2}, Bousquet--M\'elou and Eriksson consider the
generating function for $\s$-lecture hall partitions,
\[
f_n^{(\s)}(q) \  = \  \sum_{\la \in \LL_n^{(\s)}} q^{|\la|} ,
\]
where $|\la|=\la_1 + \cdots + \la_n$.
In \cite{BME1}, they show that for the sequence $\s=(1,2, \ldots, n)$ (this sequence gave rise to the name \emph{lecture hall partition} since one
interprets the parts as admissible heights of seats in a lecture hall), this generating function has the
form
\begin{equation}\label{LHT}
f_n^{(1,2, \ldots, n)}(q) \  =\sum_{\la \in \LL_n^{(1,2, \ldots, n)}} q^{|\la|} \  =  \ \prod_{i=1}^n \frac{1}{1-q^{2i-1}} \, .
\end{equation}

In partition theory, this result is notable not only because of the surprisingly simple generating function,
but also because it  is an entirely new finite form of {\em Euler's Partition Theorem}, which asserts
that the number of partitions of an integer $M$ into distinct parts is the same as the number of partitions of $M$
into odd parts. Note that the right-hand side of (\ref{LHT}) is the generating function for partitions (of any integer)
into parts from the set $\{1,3, \ldots, 2n-1\}$, which approaches the set of all odd parts as $n \rightarrow \infty$.
Correspondingly, on the left-hand side, as $n \rightarrow \infty$, the set $\LL_n^{(1,2, \ldots, n)}$ becomes the set of partitions into
distinct parts.

In \cite{BME2}, Bousquet--M\'elou and Eriksson show that a similar phenomenon occurs for a more general class of sequences which they refer to as $(k,\ell)$-sequences.
Given positive integers $k$ and $ \ell$, the \emph{$(k,\ell)$-sequence} $\a=(a_i)_{i=0}^\infty$ is defined by $a_0=0$, $a_1=1$, and for $i \geq 1$,
\[
a_{2i}=\ell a_{2i-1}-a_{2i-2} \qquad \text{ and } \qquad a_{2i+1}=k a_{2i}-a_{2i-1} \, .
\]
The following generalization of \eqref{LHT}  was proved in \cite{BME2}.
\begin{theorem}[Bousquet--M\'elou and Eriksson \cite{BME2}]
For positive integers $k, \ell \geq 2$, let $\a$ be the $(k,\ell)$-sequence and let $\b$ be the corresponding $(\ell,k)$-sequence.
Then
\begin{equation}\label{k_ell_gf}
f_n^{(\a)}(q) = \sum_{\la \in \LL_n^{(\a)}} q^{|\la|} =
\left \{
\begin{array}{ll}
\prod_{i=1}^{n}\frac{1}{1-q^{a_{i}+b_{i-1}}} & {\mbox {if $n$ is even,}}\\
\prod_{i=1}^{n}\frac{1}{1-q^{b_{i}+a_{i-1}}} & {\mbox {if $n$ is odd.} }\\
\end{array}
\right .
\end{equation}
\label{thm:k_ell_gf}
\end{theorem}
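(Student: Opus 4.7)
The plan is to mimic the induction of Bousquet-M\'elou and Eriksson. Introduce a refined bivariate generating function
\[
F_n(q, y) \ = \ \sum_{\la \in \LL_n^{(\a)}} q^{|\la|} y^{\la_n}
\]
so that $f_n^{(\a)}(q) = F_n(q, 1)$. A partition $\la \in \LL_n^{(\a)}$ decomposes as a prefix $(\la_1, \ldots, \la_{n-1})$ together with a last part $\la_n$, where the prefix obeys the lecture hall inequalities for $(a_1, \ldots, a_{n-1})$ together with the coupling $\la_{n-1}/a_{n-1} \le \la_n/a_n$. Summing $(yq)^{\la_n}$ geometrically over admissible $\la_n$ reduces the problem to a truncated lecture hall generating function, for which the recurrence $a_i = c\, a_{i-1} - a_{i-2}$ (with $c \in \{k, \ell\}$ alternating with the parity of $i$) should yield an inductive closed form.

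Next, I would verify that the advertised product satisfies the same recurrence with matching initial data. The base case $n = 1$ is immediate, since $\LL_1^{(\a)} = \integers_{\ge 0}$ and $b_1 + a_0 = 1$, so both sides equal $1/(1-q)$. For the inductive step, the arithmetic identities tying $\a$ to its companion $\b$ are essential: a routine induction on the interchanged recurrences shows $a_{2j+1} = b_{2j+1}$ and $k\, a_{2j} = \ell\, b_{2j}$, and a Wronskian-type identity on $a_i b_{i-1} - a_{i-1} b_i$ controls the telescoping of the exponents $a_i + b_{i-1}$ appearing in the product.

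The main obstacle will be the first step: because the lecture hall inequality couples consecutive parts, a naive peel-off of $\la_n$ does not cleanly factor $F_n$. The crucial conceptual move is to rescale the prefix by the shift dictated by the coupling and then recognize the rescaled prefix as a lecture hall partition of length $n-1$ \emph{for the companion $(\ell, k)$-sequence $\b$}. This $k$-$\ell$ swap is precisely what the theorem reflects in its piecewise formula for even versus odd $n$: each inductive step toggles between $\a$ and $\b$, and the exponents $a_i + b_{i-1}$ versus $b_i + a_{i-1}$ track which sequence currently plays the role of the primary one. Once the correct change of variables is identified and the recursion is pinned down, the remainder is routine generating-function algebra.
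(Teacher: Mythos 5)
First, note that the paper does not prove this statement at all: Theorem~\ref{thm:k_ell_gf} is imported verbatim from \cite{BME2} as a known result, so there is no in-paper argument to compare against. Your proposal must therefore stand on its own as a proof, and it does not: it is a plan whose decisive step is explicitly deferred. You correctly identify that a naive peel-off of $\la_n$ fails --- fixing the prefix $(\la_1,\dots,\la_{n-1})$ and summing over $\la_n \geq \lceil a_n\la_{n-1}/a_{n-1}\rceil$ produces a sum of $q^{|\mu|+\lceil a_n\mu_{n-1}/a_{n-1}\rceil}$ over $\mu\in\LL_{n-1}^{(\a)}$, which is not visibly a lecture hall generating function for any sequence --- and you then assert that ``the crucial conceptual move'' is a rescaling of the prefix identifying it with an element of $\LL_{n-1}^{(\b)}$, closing with ``once the correct change of variables is identified \dots the remainder is routine.'' That change of variables \emph{is} the theorem. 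The product formula is indeed consistent with a weight-preserving decomposition $\LL_n^{(\a)}\cong \LL_{n-1}^{(\b)}\times\integers_{\geq 0}$ (with the free factor contributing in multiples of $a_n+b_{n-1}$), but such a correspondence cannot act on the last coordinate alone; in the known proofs it entangles all parts of $\la$ and is the entire technical content of \cite{BME1,BME2}. Leaving it as ``to be identified'' is a genuine gap, not a routine omission.

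The peripheral ingredients you supply are fine as far as they go: the base case $n=1$ checks out ($b_1+a_0=1$), and the identities $a_{2j+1}=b_{2j+1}$ and $k\,a_{2j}=\ell\,b_{2j}$ are correct and straightforwardly verified by induction on the interleaved recurrences. But these are bookkeeping facts that would be needed \emph{after} the main bijection or functional equation is in hand. As written, the proposal reproduces the outer skeleton of an induction that toggles between $\a$ and $\b$ while omitting the mechanism that drives it, so it cannot be accepted as a proof of \eqref{k_ell_gf}.
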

Note that if $k=\ell$, then $\a=\b$ and the generating function of the theorem becomes
\begin{equation}\label{ell_gf}
\prod_{i=1}^n \frac{1}{1-q^{a_i+a_{i-1}}} \, ,
\end{equation}
which simplifies to \eqref{LHT} when $k=\ell=2$.
When  $k=\ell$, we call $\a$ an \emph{$\ell$-sequence} \cite{SY}.  Thus the $\ell$-sequences are defined for a positive integer $\ell\geq 2$ by
\begin{equation}\label{l-sequencesdef}
  a_{i+1} = \ell a_{i} - a_{i-1} \, , 
\end{equation}
with initial conditions $a_1=1$, $a_0=0$.

To appreciate the significance of 
\eqref{LHT} and \eqref{k_ell_gf}, note that for a general sequence $\s$ of positive integers, the $\s$-lecture hall partitions have a generating function of the form
\begin{equation}
 f_n^{(\s)}(q) = \frac{H(q)}{\prod_{i=1}^n(1-q^{s_i+ \cdots + s_n})} \, ,
\label{horrible_gf}
\end{equation}
where $H(q)$ is a polynomial with nonnegative integer coefficients satisfying
$H(1)= \prod_{i=1}^n s_i$.
(See, e.g., \cite[Thm.~5]{SS}, where $H(q)$ is given a combinatorial interpretation.)
So, in the case of $(k,\ell)$-sequences, the numerator $H(q)$ factors and is completely cancelled by the denominator.

It is natural to consider if there might be other sequences $\s$ for which $f_n^{(\s)}$ would have the form
\begin{equation}
 f_n^{(\s)}(q) = \prod_{i=1}^n\frac{1}{(1-q^{e_i})}
\label{simple_gf}
\end{equation}
for some positive integers $e_1,\ldots,e_n$.
In \cite{BME2}, Bousquet--M\'elou and Eriksson investigate  sequences $\s$ having this property.
Their approach (and ours) is to study self-reciprocal generating functions.

\subsection{Lecture hall partitions and self-reciprocal generating functions}

A rational function $r(q)$ is {\em self-reciprocal} if there exists a nonnegative integer $k$ such that
\[
r(\tfrac 1 q) = \pm q^k \, r(q) \, .
\]
Note that if $f_n^{(\s)}(q)$ is of the form \eqref{simple_gf},  then 
$f_n^{(\s)}(q)$ is self-reciprocal, since
\[
f_n^{(\s)}\left(\frac{1}{q}\right) = \frac{(-1)^nq^{e_1+e_2+\cdots+e_n}}{\prod_{i=1}^n(1-q^{e_i})} \, .
\]
This led Bousquet--M\'elou and Eriksson to investigate in \cite{BME2} the relationship between the condition defining the $(k,\ell)$-sequences and the property that the generating function $f_n^{(\s)}(q)$ is self-reciprocal.
Bousquet--M\'elou and Eriksson define a sequence $\s$ to be {\em polynomic} if $f_n^{(\s)}(q)$ is the multiplicative inverse of a polynomial; hence, the $(k,\ell)$-sequences are polynomic.
They conjecture that in some sense all polynomic sequences (and, consequently, their self-reciprocal generating functions) arise from $(k,\ell)$-sequences, and they prove the following partial characterization.

\begin{theorem} [Bousquet--M\'elou and Eriksson \cite{BME2}]\label{thm:BMErec}
If $\s$ is a non-decreasing sequence of positive integers with the property that
$\gcd(s_i,s_{i-1})=1$ for $1<i\leq n$, then $f_n^{(\s)}(q)$ is self-reciprocal
if and only if 
$s_i+s_{i-2}$ is a multiple of $s_{i-1}$,
for $3 \leq i \leq n$, and
$s_2+1$ is a multiple of $s_1$. 
\label{BMEsr}
\end{theorem}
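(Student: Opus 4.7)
My plan is to reduce self-reciprocity of $f_n^{(\s)}(q)$ to palindromicity of the numerator polynomial $H(q)$ in (\ref{horrible_gf}), and then to establish that palindromicity via an explicit involution on the lattice points of a fundamental parallelepiped. Writing $f_n^{(\s)}(q) = H(q)/\prod_{i=1}^n(1-q^{T_i})$ with $T_i := s_i+\cdots+s_n$, the identities $H(1/q) = q^{-\deg H}H^*(q)$ and $\prod(1-q^{-T_i}) = (-1)^n q^{-\sum T_i}\prod(1-q^{T_i})$ give directly that $f_n^{(\s)}(q)$ is self-reciprocal if and only if $H(q)$ is palindromic. Interpreting $\LL_n^{(\s)}$ as the integer points of the simplicial cone with extreme rays $v_k = (0,\ldots,0,s_k,s_{k+1},\ldots,s_n)$ for $k=1,\ldots,n$ yields
\[
H(q) \;=\; \sum_{w\in\Pi\cap\integers^n} q^{|w|},
\]
where $\Pi$ is the half-open fundamental parallelepiped. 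Setting $a_i := w_i$, the integer points of $\Pi$ are exactly tuples $(a_1,\ldots,a_n)$ satisfying $0\le a_1\le s_1-1$ and $\lceil s_i a_{i-1}/s_{i-1}\rceil \le a_i \le \lceil s_i a_{i-1}/s_{i-1}\rceil + s_i-1$ for $i\ge 2$, with $|w| = \sum a_i$.

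For sufficiency, adopt the convention $s_0 := 1$ so the hypothesis reads $s_{i-1}\mid s_i+s_{i-2}$ uniformly for $2\le i\le n$. I define $\phi(a_1,\ldots,a_n) = (a_1',\ldots,a_n')$ by $a_1' := s_1-1-a_1$ and, recursively,
\[
a_i' \;:=\; \lceil s_i a_{i-1}/s_{i-1}\rceil + \lceil s_i a_{i-1}'/s_{i-1}\rceil + (s_i-1) - a_i.
\]
A direct check shows $\phi$ preserves the parameter set and satisfies $\phi^2 = \mathrm{id}$. The crux is that $K_i := a_i + a_i'$ is independent of $(a_1,\ldots,a_n)$; I would prove this by induction on $i$, simultaneously establishing the congruence $s_{i+1}K_i \equiv 1\pmod{s_i}$. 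The base case $K_1 = s_1-1$ yields $s_2 K_1 \equiv -s_2 \equiv 1\pmod{s_1}$ from $s_1\mid s_2+1$; for the inductive step, I compute modulo $s_{i+1}$ that $s_{i+2}(s_{i+1}K_i - 1) \equiv s_i \pmod{s_{i+1}}$ using $s_{i+2}\equiv -s_i \pmod{s_{i+1}}$, and then divide by $s_i$, invertible thanks to $\gcd(s_i, s_{i+1}) = 1$. This congruence implies that every admissible $a = a_i$ satisfies
\[
\lceil s_{i+1} a/s_i\rceil + \lceil s_{i+1}(K_i-a)/s_i\rceil \;=\; \lfloor s_{i+1} K_i/s_i\rfloor + 1,
\]
by considering separately the cases $s_i\mid a$ and $s_i\nmid a$; the latter uses the inequality $\{s_{i+1} a/s_i\} \ge 1/s_i = \{s_{i+1}K_i/s_i\}$. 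Summing over $i$ yields $\sum a_i+\sum a_i' = \sum K_i$, proving palindromicity.

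For necessity I would induct on $n$. For $n=2$, factor $H(q) = (1+q+\cdots+q^{s_2-1})\sum_{a_1=0}^{s_1-1}q^{a_1+\lceil s_2 a_1/s_1\rceil}$: since $1+q+\cdots+q^{s_2-1}$ is palindromic, $H$ is palindromic if and only if the second factor is, and comparing exponents at $a_1 = 0, 1, 2, \ldots$ via a modular analysis of $\lceil s_2 a_1/s_1\rceil$ forces $s_2 \equiv -1\pmod{s_1}$. For general $n$, if $i$ is the smallest index where the divisibility fails, palindromicity of $H$ forces constancy of the level-$i$ ceiling sum across admissible $a_{i-1}$, and the base-case argument then produces a contradiction modulo $s_{i-1}$.

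The main obstacle is the simultaneous induction in the sufficiency step: propagating $s_{i+1}K_i \equiv 1\pmod{s_i}$ using only the local three-term hypotheses forces one to repeatedly exploit the coprimality $\gcd(s_i,s_{i-1}) = 1$, and one must separately handle those $a_i$ with $s_i \mid a_i$ to ensure the ceiling-sum identity is uniform. The necessity direction is also subtle because non-constancy of the level-$i$ ceiling sum must survive combination with the palindromic contributions from lower levels; here the nondecreasing assumption on $\s$ enters to control the admissible ranges of the $a_j$.
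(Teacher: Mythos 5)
Your sufficiency argument checks out and is genuinely different from the paper's route: the paper never touches the fundamental parallelepiped, but instead invokes Stanley's theorem (Theorem~\ref{thm:StanGor}) to convert self-reciprocity into the Gorenstein condition for $\CC_n^{(\s)}$, reduces that to the existence of an integer point $\c$ with $c_js_{j-1}-c_{j-1}s_j=\gcd(s_j,s_{j-1})=1$ (Lemma~\ref{lemma:shiny-gorenstein} and Corollary~\ref{cor:csrec}), and then reads off $s_{j-1}+s_{j+1}\equiv 0\pmod{s_j}$ from $\gcd(c_j,s_j)=1$ in the proof of Theorem~\ref{genBMEsr}. Your involution $\phi$, the constancy of $K_i=a_i+a_i'$, and the congruence induction $s_{i+1}K_i\equiv 1\pmod{s_i}$ all verify correctly, and together they give an elementary, machinery-free proof of the ``if'' direction (one which, like Theorem~\ref{genBMEsr}, does not actually use that $\s$ is non-decreasing).

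The gap is in the necessity direction, at the step ``palindromicity of $H$ forces constancy of the level-$i$ ceiling sum across admissible $a_{i-1}$.'' This is asserted, not proved, and it is exactly the hard point. Palindromicity is a statement about the multiset of total exponents $\sum_j a_j$; non-constancy of $\lceil s_ia_{i-1}/s_{i-1}\rceil+\lceil s_i(K_{i-1}-a_{i-1})/s_{i-1}\rceil$ only shows that your particular level-respecting involution fails to reverse exponents, not that no exponent-reversing bijection exists: the asymmetry created at level $i$ could a priori be compensated by the contributions of levels $i+1,\dots,n$, whose admissible ranges depend on $a_i$ and hence feed back into the exponent distribution. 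Your $n=2$ case genuinely works because there $H$ factors as a palindromic polynomial times $\sum_{a_1}q^{\,a_1+\lceil s_2a_1/s_1\rceil}$, whose exponents are distinct and strictly increasing, so palindromicity pins down each exponent individually; no such factorization isolates an intermediate level when $n\geq 3$ (even reducing to ``failure at the top level'' is not available, since truncating $\s$ changes $H$ by more than a palindromic factor). This implication --- self-reciprocal Hilbert series implies Gorenstein --- is precisely what the paper imports from commutative algebra via Theorem~\ref{thm:StanGor}, after which the minimality argument of Lemmas~\ref{lem:greedygorensteinlemma} and~\ref{lemma:shiny-gorenstein} and the arithmetic of Corollary~\ref{cor:csrec} finish the job. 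To complete your version you would need either a direct proof that the level-$i$ asymmetry survives in the coefficients of $H$, or to invoke the algebraic implication you were trying to avoid.
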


\begin{remark}
The original statement of Theorem~\ref{thm:BMErec} was a multivariate result.
The equivalence of this with the univariate statement above follows from Theorem~\ref{thm:StanGor} below.
\end{remark}

Theorem \ref{BMEsr} can be applied to show  that the $(k,\ell)$-sequences  have self-reciprocal generating functions.
It also implies that the generating function for the $(1,3,5,7)$-lecture hall partitions 
is self-reciprocal.  
However, the sequence $\s=(1,3,5,7)$ is not polynomic: it was shown in \cite{BME2} that
\[
f_4^{(1,3,5,7)}(q)
 = \frac{H(q)}{(1-q^7)(1-q^{12})(1-q^{15})(1-q^{16})} \, 
 = \frac{1-q+q^3-q^4+q^5-q^7+q^8}{(1-q)^2(1-q^{12})(1-q^{16})} \, ,
\]
where
\begin{align}
H(q) & = {q}^{28}+{q}^{27}+{q}^{26}+2\,{q}^{25}+2\,{q}^{24}+3\,{q}^{23}+4\,{q}^{22}+3\,{q}^{21}+4\,{q}^{20} \nonumber\\
& +5\,{q}^{19}+5\,{q}^{18}+6\,{q}^{17}+6\,{q }^{16}+6\,{q}^{15} +
 {q}^{14}+6\,{q}^{13}+6\,{q}^{12}+6\,{q}^{11} \label{Hq} \\
& +5\, {q}^{10}+5\,{q}^{9}+4\,{q}^{8}+3\,{q}^{7}+4\,{q}^{6}+3\,{q}^{5}+2\,{q}^{4}+2\,{q}^{3}+{q}^{2}+q+1 \, . \nonumber
\end{align}
Note that the coefficients of $H(q)$ are nonnegative.

Theorem \ref{BMEsr} can also be applied to get negative results.
For example,  for the first five terms of the Fibonacci sequence $\s=(1,1,2,3,5)$,
the generating function $f_5^{(\s)}(q)$ is not self-reciprocal
since $5+2$ is not a multiple of $3$.  As a consequence,
$f_5^{(\s)}(q)$ cannot have the form \eqref{simple_gf}.

But many sequences are not covered by Theorem \ref{BMEsr}.  For example,
$\s=(1,3,2,1,3,2)$ is not monotone;   $\s=(1,3,18,81,405,1944)$ does not satisfy
$\gcd(s_i,s_{i+1})=1$. We will show in Section 2 that both sequences give rise to $\s$-lecture hall partitions with self-reciprocal generating functions.
One of our main contributions is
the following result, which is implied by Theorem~\ref{Gorenstein_fae} (in Section
\ref{overviewgorcone}) and Theorem \ref{thm:lseqgorconj} (in Section \ref{gorensteinconditionsection}).
\begin{theorem}\label{thm:main}
Let $\ell > 0$ and $b \not = 0$ be integers satisfying $\ell^2+4b \geq 0$.
Let $\s$ be defined by the recurrence
\begin{equation}\label{recdef}
  s_n = \ell s_{n-1} + b s_{n-2} \, , 
\end{equation}
with initial conditions $s_1=1$, $s_0=0$.
The lecture hall generating function $f_n^{(\s)}(q)$ is self-reciprocal for all $n \geq 0$ if and only if $b=-1$.
If $b \not = -1$, there is an integer $n_0 = n_0(b,\ell)$ so that for all $n \geq n_0$,
 $f_n^{(\s)}(q)$ is not self-reciprocal and, consequently, $f_n^{(\s)}(q)$ cannot have the form
$((1-q^{e_1})(1-q^{e_2}) \cdots (1-q^{e_n}))^{-1}$.
\end{theorem}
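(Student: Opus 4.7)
The plan is to invoke the equivalence, established in Theorem~\ref{Gorenstein_fae}, between self-reciprocity of $f_n^{(\s)}(q)$ and the Gorenstein property of the $n$-dimensional $\s$-lecture hall cone. This converts the statement into a purely arithmetic assertion about the pair $(\ell,b)$: the cone is Gorenstein for every $n$ if and only if $b = -1$, and otherwise fails for all $n$ sufficiently large.

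The sufficient direction is then immediate. When $b = -1$, the condition $\ell^2 + 4b \ge 0$ forces $\ell \ge 2$, so the recursion $s_n = \ell s_{n-1} - s_{n-2}$ with $s_0 = 0$, $s_1 = 1$ is precisely the defining recursion~\eqref{l-sequencesdef} of an $\ell$-sequence. Theorem~\ref{thm:lseqgorconj} then asserts that the $\s$-lecture hall cone is Gorenstein for every $n \ge 1$, so $f_n^{(\s)}(q)$ is self-reciprocal in every dimension.

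For the necessary direction, I would extract from Theorem~\ref{Gorenstein_fae} an explicit arithmetic obstruction to Gorensteinness. In the cleanest subcase, where $\s$ is nondecreasing with $\gcd(s_{i-1}, s_i) = 1$, Theorem~\ref{thm:BMErec} already supplies one: self-reciprocity forces $s_{i-1} \mid s_i + s_{i-2}$ for $3 \le i \le n$. Substituting the recurrence yields
\[
s_i + s_{i-2} \;=\; \ell\, s_{i-1} + (b+1)\, s_{i-2},
\]
so the condition collapses to $s_{i-1} \mid (b+1)\, s_{i-2}$, and a short induction using $\gcd(s_0,s_1)=1$ reduces this to $s_{i-1} \mid (b+1)$. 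Under the theorem's hypotheses, the characteristic polynomial $x^2 - \ell x - b$ has a dominant real root strictly greater than $1$ for every admissible $(\ell,b) \ne (2,-1)$, so $s_{i-1} \to \infty$; since $b+1 \ne 0$, the divisibility must fail from some index $n_0(\ell,b)$ onward.

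The main obstacle is covering the cases \emph{not} handled by Theorem~\ref{thm:BMErec}, namely those in which $\s$ is not nondecreasing or in which consecutive terms share common factors (for instance when $\gcd(\ell,b)>1$, as with $\ell=b=2$ where $\gcd(s_2,s_3)=2$). There the argument above is unavailable, and one must work directly with the Gorenstein criterion of Theorem~\ref{Gorenstein_fae}; the expectation is that it yields a parallel arithmetic condition on the $s_i$ whose failure is again forced by the unbounded growth of $|s_i|$. A secondary issue, largely bookkeeping, is to confirm that once the Gorenstein condition fails at some $n_0$ it continues to fail for all $n \ge n_0$, so that the constant $n_0(\ell,b)$ in the statement is well defined.
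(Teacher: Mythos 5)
Your overall framing matches the paper exactly: Theorem~\ref{thm:gorsym} converts self-reciprocity of $f_n^{(\s)}(q)$ into the Gorenstein property of $\CC_n^{(\s)}$, and the paper then simply cites Theorem~\ref{thm:lseqgorconj} for the arithmetic content. But your proposal is internally inconsistent about what it is allowed to assume. For the ``if'' direction you invoke Theorem~\ref{thm:lseqgorconj} wholesale; that theorem is an ``if and only if,'' so invoking it would finish the ``only if'' direction too, and there would be nothing left to prove. Since you instead attempt an independent proof of the ``only if'' direction, the real question is whether that attempt succeeds --- and it does not, because the only case you actually close is the one where consecutive terms are coprime (equivalently $\gcd(\ell,b)=1$), via Theorem~\ref{thm:BMErec}. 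That argument ($s_{i-1}\mid(b+1)s_{i-2}$, hence $s_{i-1}\mid b+1$, which fails once $s_{i-1}>|b+1|$) is correct as far as it goes, but it is the easy case; the paper even records a sharp version of it as a corollary.

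The genuine gap is the case $\gcd(\ell,b)>1$, which you explicitly leave as an ``expectation.'' Your expectation --- that the general Gorenstein criterion yields a divisibility condition whose failure is forced by the growth of $s_i$ alone --- is not how the obstruction works. Writing $g_n=\gcd(s_n,s_{n-1})$, the criterion of Corollary~\ref{cor:csrec} leads (as in Theorem~\ref{thm:case2}) to the condition that $s_n/g_n$ divides $g_{n+1}/g_n+b$. The right-hand side is \emph{bounded} but need not be $b+1$, and in particular it can vanish; when it vanishes the divisibility is satisfied for every $n$ no matter how fast $s_n$ grows. So growth alone proves nothing here. The paper must first pin down the exact power structure of $g_n$ (Lemmas~\ref{sdivisor}--\ref{gcd_lemma_2}, showing $g_{n+1}=u_{n+1}t^{n-1}\sigma^{\lfloor n/2\rfloor}$ with $u_{n+1}\mid t$), then show via Lemma~\ref{lemma:growth} that for large $n$ the divisibility forces $g_{n+1}/g_n+b=0$ exactly, and finally exploit this identity at two consecutive indices of opposite parity to force $t=1$ and $b=-t^2=-1$. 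None of this is routine bookkeeping, and it is precisely the content your proposal omits. (The point you correctly flag as bookkeeping --- that failure at $n_0$ propagates to all $n\ge n_0$ --- is Corollary~\ref{cor:onenenough} and is indeed immediate from the recursive determination of the Gorenstein point.)
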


The $\ell$-sequences have special significance in partition theory, giving rise to an $\ell$-version of Euler's partition theorem \cite{BME2, SY}.
But it has not been clear whether other sequences might behave similarly, or what other properties might distinguish $\ell$-sequences.
Theorem~\ref{thm:main} demonstrates that $\ell$-sequences play a characterizing role in the study of $\s$-lecture hall partitions.

The proof of Theorem~\ref{thm:main}, and of our extension of Theorem~\ref{BMEsr} to more general sequences,
involves properties of \emph{Gorenstein cones}.
The Gorenstein condition provides a framework for relating self-reciprocity of generating functions to arithmetic properties of the integer points in a rational pointed cone, a framework that has been established using the theory of normal semigroup algebras in combinatorial commutative algebra.

\subsection{Gorenstein lecture hall cones }\label{overviewgorcone}

In partition theory, the defining constraints for $\LL_n^{(\s)}$ are unusual, dealing with the {\em ratio} of consecutive parts of the partition $\la$, rather than  more typical constraints, such as the difference of consecutive parts, or the set of allowable parts.
However, within the theory of lattice-point enumeration in rational polyhedral cones, the lecture-hall constraints are just special cases of
general linear constraints bounding a region that contains the points of interest.
This is not a new observation; in \cite[Section 5]{BME2}, Bousquet--M\'elou and Eriksson point out that the study of lecture hall partitions falls naturally within the theory of linear homogeneous diophantine systems of inequalities.
However, this observation provides a wide variety of algebraic and combinatorial tools for applications, including the Gorenstein property.

Recall that a \emph{polyhedral cone} in $\reals^n$ is the solution set to a finite collection of linear inequalities $Ax\geq 0$ for some real matrix $A$.
The cone is \emph{rational} if $A$ has rational entries, it is \emph{simple} if it can be described by $n$ inequalities, and it is \emph{pointed} if it does not contain a linear subspace of $\reals^n$.
A pointed rational cone $\CC \subset \reals^n$ is \emph{Gorenstein} if there exists an integer point $\c$ in the interior~$\CC^\circ$ of $\CC$ such that $\CC^\circ \cap \integers^n = \c+(\CC \cap\integers^n)$.
We call $\c$ a \emph{Gorenstein point} of $\CC$.


For a sequence $\s = \{s_i\}_{i \geq 1}$ of positive integers,
the {\em $\s$-lecture hall cone}
$\CC_n^{(\s)}$ is defined by
\[
\CC_n^{(\s)}=\left\{\lambda\in\reals^n:0 \leq \frac{\la_{1}}{{s_1}}\leq \frac{\la_{2}}{{s_2}} \leq \cdots\leq \frac{\la_{n}}{{s_n}}\right\} \, .
\]
Observe that $\CC_n^{(\s)}$ is a pointed rational cone and
$ \LL_n^{(\s)} = \CC_n^{(\s)} \cap \integers^n$.
The Gorenstein condition for $\CC_n^{(\s)}$ appears implicitly in the work of Bousquet--M\'elou and Eriksson \cite{BME2}, where the Gorenstein point of $\CC_n^{(\s)}$ is referred to as the \emph{minimal strict lecture hall partition}.
The key feature of Gorenstein cones as they appear in our context is the following.
Set
\[
g^{(\s)}(x_1,\ldots,x_n)=\sum_{\la \in \LL_n^{(\s)}}x_1^{\la_1}x_2^{\la_2}\cdots x_n^{\la_n} \, .
\]

\begin{theorem} \label{thm:gorsym}
For a sequence $\s$ of positive integers, write as in \eqref{horrible_gf} 
\[
f_n^{(\s)}(q) = \frac{H(q)}{\prod_{i=1}^n(1-q^{s_i+ \cdots + s_n})} \, .
\]
The following are equivalent:
\begin{itemize}
\item  $\CC_n^{(\s)}$ is Gorenstein;
\item there exists $m\in \LL_n^{(\s)}$ such that $g^{(\s)}(\frac 1 {x_1},\ldots, \frac 1 {x_n})=\pm x_1^{m_1}\cdots x_n^{m_n}g^{(\s)}(x_1,\ldots,x_n)$;
\item $f_n^{(\s)}(q) =  \sum_{\la \in \LL_n^{(\s)}} q^{|\la|}$ is self-reciprocal.
\end{itemize}
\label{Gorenstein_fae}
\end{theorem}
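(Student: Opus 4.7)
The plan is to establish the cycle $(1) \Rightarrow (2) \Rightarrow (3) \Rightarrow (1)$, using as the main engine the Stanley reciprocity theorem for pointed rational cones of full dimension. For $\CC = \CC_n^{(\s)} \subset \reals^n$, this theorem reads
$$\sum_{\la \in \CC \cap \integers^n} x^{-\la} \ = \ (-1)^n \sum_{\la \in \CC^\circ \cap \integers^n} x^{\la}$$
as an identity of rational functions in $x_1,\ldots,x_n$.

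For $(1) \Rightarrow (2)$: assuming $\CC_n^{(\s)}$ is Gorenstein with Gorenstein point $\c$, the defining identity $(\CC_n^{(\s)})^\circ \cap \integers^n = \c + \LL_n^{(\s)}$ converts the right-hand side of Stanley reciprocity into $(-1)^n x_1^{c_1} \cdots x_n^{c_n} g^{(\s)}(x_1,\ldots,x_n)$, producing the multivariate self-reciprocity with $m = \c$. For $(2) \Rightarrow (3)$ I would simply specialize $x_1 = \cdots = x_n = q$ in the multivariate identity; the exponent $|m|$ becomes the shift in the univariate self-reciprocity.

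For $(3) \Rightarrow (1)$, which I expect to be the main obstacle, I would exploit the fact that $\CC_n^{(\s)}$ is a simplicial cone of dimension $n$, whose $n$ extreme rays have weighted degree $s_i + \cdots + s_n$ under $\la \mapsto |\la|$. Writing $f_n^{(\s)}(q) = H(q)/\prod_{i=1}^n(1-q^{s_i+\cdots+s_n})$ as in \eqref{horrible_gf}, the denominator $D(q)$ satisfies $D(1/q) = (-1)^n q^{-\sum_i(s_i+\cdots+s_n)} D(q)$, so self-reciprocity of $f_n^{(\s)}(q)$ is equivalent to palindromicity of $H(q)$. The coefficients of $H(q)$, graded by $|\la|$, enumerate lattice points in the half-open fundamental parallelepiped of $\CC_n^{(\s)}$. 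The classical Stanley criterion --- in the form of Theorem~\ref{thm:StanGor} --- then converts palindromicity of this $h^*$-polynomial into the existence of an involution $\la \mapsto \c - \la$ on these box lattice points, and this $\c$ is exactly a Gorenstein point of $\CC_n^{(\s)}$.

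The subtlety in $(3) \Rightarrow (1)$ is that univariate self-reciprocity is in general strictly weaker than multivariate self-reciprocity; here it suffices only because simpliciality of $\CC_n^{(\s)}$ lets us factor the univariate denominator cleanly and because the grading $\la \mapsto |\la|$ remains positive on each extreme ray, so the lattice-theoretic symmetry can be recovered from its univariate generating-function shadow via Theorem~\ref{thm:StanGor}.
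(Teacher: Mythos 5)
Your proposal is correct, but it takes a more roundabout route than the paper. The paper's entire proof is two sentences: it observes that the weight functional $w(\la)=|\la|=\sum_i\la_i$ satisfies the three hypotheses of Theorem~\ref{thm:StanGor} (nonnegativity and integrality on $\LL_n^{(\s)}=\CC_n^{(\s)}\cap\integers^n$, positivity off the origin, finite fibers), notes that with this $w$ the functions $f$ and $g$ of that theorem are exactly $f_n^{(\s)}(q)$ and $g^{(\s)}(x_1,\ldots,x_n)$, and then all three equivalences drop out of Theorem~\ref{thm:StanGor} simultaneously. You instead build the cycle $(1)\Rightarrow(2)\Rightarrow(3)\Rightarrow(1)$, proving the first two implications by hand via Ehrhart--Macdonald--Stanley reciprocity for the cone and monomial specialization; these arguments are correct and have the virtue of being self-contained and of exhibiting the Gorenstein point $\c$ explicitly as the exponent vector $m$. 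But for the only genuinely hard implication, $(3)\Rightarrow(1)$, you fall back on Theorem~\ref{thm:StanGor} anyway, and since that theorem already asserts the equivalence of the Gorenstein property with \emph{both} the univariate and the multivariate reciprocity conditions, your hand-proved implications are logically redundant. What your approach buys is transparency about where $\c$ comes from; what it costs is length and an unexamined dependence on the same black box.

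Two small points to tighten. First, you never verify the hypotheses on the grading that Theorem~\ref{thm:StanGor} requires; this is easy (all coordinates of a point of $\CC_n^{(\s)}$ are nonnegative, so $|\la|=0$ forces $\la=0$ and each fiber $w^{-1}(a)\cap\CC_n^{(\s)}$ is bounded), but it is the actual content of the paper's proof and should be said. Second, your closing diagnosis of why univariate self-reciprocity suffices is slightly off: the reason is not simpliciality of $\CC_n^{(\s)}$ or the clean factorization of the denominator, but the fact that $\complex[\CC_n^{(\s)}]$ is Cohen--Macaulay (Hochster's theorem, which holds for every pointed rational cone), which is what allows Stanley's criterion to upgrade palindromicity of the $h^*$-polynomial to the Gorenstein property. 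Neither issue is a gap in correctness, since Theorem~\ref{thm:StanGor} as stated in the paper already packages all of this.
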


The last condition is equivalent to $H(q)$ being a \emph{palindromic} polynomial, i.e., its coefficient sequence reads the same from left to right as from right to left.
Theorem~\ref{thm:gorsym} explains the behavior of the example following Theorem \ref{BMEsr}, where we observed that the  $(1,3,5,7)$-lecture hall partitions have a self-reciprocal generating function, and its corresponding $H(q)$ shown in \eqref{Hq} is palindromic.
A proof of Theorem~\ref{thm:gorsym} is given in Section~\ref{backgroundsection}.
In Section~\ref{testingsection}, we develop tools for checking the Gorenstein condition for $ \CC_n^{(\s)}$.
In Section~\ref{constructsection}, we use these tools to generalize Theorem \ref{BMEsr} and construct sequences that give rise to Gorenstein lecture hall cones.  

Theorem \ref{BMEsr} leaves open the possibility that sequences of the form~\eqref{recdef}, other than $\ell$-sequences,  could have
self-reciprocal generating functions.
For example, when $\ell=3$ and $b=9$, $\s=(1,3,18,81,405,1944, \ldots)$.
As will be shown in Section~\ref{sec:gorcones}, $f_6^{(\s)}(q)$ is self-reciprocal, but $f_7^{(\s)}(q)$ is not.
In Section~\ref{gorensteinconditionsection}, we prove Theorem~\ref{thm:lseqgorconj}, which states that the $\ell$-sequences are unique among second order linear recurrences in the following sense:
$\CC_n^{(\s)}$ is Gorenstein for all $n \geq 1$ if and only if $\s$ is an $\ell$-sequence.
The proof relies on the tools developed in Section~\ref{testingsection} together with a delicate analysis of how $\gcd(s_i,s_{i+1})$ is related to $b$ and $\ell$. 

Another consequence of our main results involves the $h^*$-vector of certain polytopes associated with the $\s$-lecture hall partitions.
For a sequence $\s = \{s_i\}_{i \geq 1}$ of positive integers,
the {\em rational lecture hall polytope} $\RR_n^{(\s)}$ is defined by
\[
\RR_n^{(\s)} = \left\{ \la \in \CC_n^{(\s)}  : \, \la_n \leq 1 \right\}.
\]
Note that $\RR_n^{(\s)}$ is a rational simplex (i.e., the convex hull of $n+1$ affinely independent points in $\reals^n$) because it is an $n$-dimensional object
defined by $n+1$ linear inequalities.
In Section~\ref{ehrhartsection}, we define the $h^*$-vector of  $\RR_n^{(\s)}$ and show it is symmetric if and only if $\CC_n^{(\s)}$ is Gorenstein.
This settles some conjectures from~\cite{PS}.


\section{Gorenstein $\s$-lecture hall cones}\label{sec:gorcones}


\subsection{Background}\label{backgroundsection}

All of our results depend on the following theorem.

\begin{theorem}[Stanley \cite{StanleyHilbFns}]\label{thm:StanGor}
Let $\CC  \subseteq \reals^n$  be an $n$-dimensional pointed rational cone and let $\ve e_1,\ldots,\ve e_n$ denote the standard basis for $\reals^n$.
Let $w$ be a linear functional from $\reals^n$ to $\reals$ such that for all $\ve u\in \CC \cap \integers^n$, we have
\begin{itemize}
\item $w({\ve u})$ is a nonnegative integer,
\item $w({\ve u)}= 0$ implies ${\ve u}=0$, and
\item for any nonnegative integer $a$, $w^{-1}(a)\cap \CC$ is a finite set.
\end{itemize}
Let 
\[
f(q) = \sum_{{\ve u} \in \CC \cap \integers^n} q^{w({\ve u})} \phantom{..} \text{ and }
 \phantom{..} g(x_1,\ldots,x_n) = \sum_{\ve u \in \CC \cap \integers^n} x_1^{u_1w(\ve e_1)}\cdots x_n^{u_nw(\ve e_n)} \, .
\]  
Then $\CC$ is Gorenstein if and only if, as rational functions, one (and hence both) of the following conditions hold: 
\begin{itemize}
\item $f(q)=(-1)^{n}q^b f(\tfrac 1 q)$ for some nonnegative integer $b$.
\item there exists ${ \ve m}\in \CC \cap \integers^n$ such that $g(\frac 1 {x_1},\ldots, \frac 1 {x_n})=\pm x_1^{m_1}\cdots x_n^{m_n}g(x_1,\ldots,x_n)$;
\end{itemize}
\end{theorem}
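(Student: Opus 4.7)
The plan is to reduce Theorem~\ref{thm:StanGor} to Stanley's classical commutative-algebra theorem characterizing Gorenstein positively graded Cohen--Macaulay domains via palindromic Hilbert series. First I would consider the affine semigroup algebra $R = k[\CC \cap \integers^n]$, realized as the monomial subalgebra of $k[x_1^{\pm 1},\ldots,x_n^{\pm 1}]$ spanned by $\{x^{\ve u} : \ve u \in \CC \cap \integers^n\}$ over a field $k$. Because $\CC$ is a full-dimensional pointed rational cone, $\CC \cap \integers^n$ is a finitely generated cancellative monoid by Gordan's lemma, so $R$ is a finitely generated normal $k$-algebra, and Hochster's theorem then guarantees that $R$ is Cohen--Macaulay of Krull dimension $n$. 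Using $w$, I would impose a $\integers$-grading on $R$ by $\deg(x^{\ve u}) = w(\ve u)$; the three hypotheses on $w$ translate respectively into the grading being well-defined and nonnegative, $R_0 = k$ (since $w(\ve u)=0$ forces $\ve u=0$), and each graded piece finite-dimensional, so $R$ is a positively $\integers$-graded algebra whose Hilbert series is exactly $f(q)$. Refining to the $\integers^n$-grading $\deg(x^{\ve u}) = \ve u$ gives the multigraded Hilbert series $g(x_1,\ldots,x_n)$.

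Next, I would interpret the combinatorial Gorenstein condition algebraically via the Danilov--Stanley description of the canonical module of a normal affine semigroup ring: $\omega_R$ is the monomial $R$-module with $k$-basis $\{x^{\ve u} : \ve u \in \CC^\circ \cap \integers^n\}$. Then $R$ is Gorenstein if and only if $\omega_R$ is principal as an $R$-module, and by comparing monomial bases this occurs precisely when $\CC^\circ \cap \integers^n = \c + (\CC \cap \integers^n)$ for some $\c \in \CC^\circ$ --- which is exactly the definition of Gorenstein used in the paper, with the generator of $\omega_R$ being the monomial $x^{\c}$.

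Finally, I would invoke Stanley's reciprocity theorem for graded Cohen--Macaulay domains: such a domain of Krull dimension $n$ is Gorenstein if and only if its Hilbert series satisfies $F(q) = (-1)^n q^b F(1/q)$ for some nonnegative integer $b$, with an analogous multigraded statement holding. Applied to $R$, this yields the univariate self-reciprocity with $b = w(\c)$ and the multivariate self-reciprocity with $\ve m = \c$; equivalence of the two conditions in the theorem is then immediate because the specialization $x_i \mapsto q^{w(\ve e_i)}$ carries $g$ to $f$ and $x_1^{m_1}\cdots x_n^{m_n}$ to $q^{w(\ve m)}$, while conversely either form of Stanley's criterion independently detects Gorenstein-ness of the same ring $R$. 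The main obstacle is the Danilov--Stanley identification of $\omega_R$ with the interior monomial ideal: proving this requires genuine commutative-algebra input, typically via local cohomology of the semigroup ring or, more combinatorially, via Ehrhart--Macdonald reciprocity for rational cones combined with the uniqueness of the canonical module up to isomorphism. Once that ingredient is in hand, the rest is bookkeeping between the cone, the semigroup ring, and its Hilbert series.
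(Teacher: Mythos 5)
Your proposal is correct and follows essentially the same route as the paper: form the semigroup algebra $\mathbb{C}[\CC\cap\integers^n]$, invoke Hochster for Cohen--Macaulayness, identify the algebraic and geometric Gorenstein conditions, and conclude via Stanley's uni- and multivariate Hilbert-series reciprocity theorems (the paper simply cites Stanley's Theorem 6.7 for the step you expand via the Danilov--Stanley canonical module). The only quibble is bookkeeping: with the theorem's twisted definition of $g$, the substitution recovering $f$ is $x_i\mapsto q$, not $x_i\mapsto q^{w(\ve e_i)}$.
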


Theorem~\ref{thm:StanGor} as stated looks much different than how it appears originally \cite[Theorems 4.4 and 6.1]{StanleyHilbFns}, as statements about finitely generated graded algebras.
We will briefly explain how the commutative algebra comes into play here, via theorems due to Richard Stanley and Mel Hochster; we refer the reader to the comprehensive monograph \cite{CMRings} for a textbook account.
From $\CC$ as given in Theorem~\ref{thm:StanGor}, a semigroup algebra $\mathbb{C}[\CC]=\mathbb{C}[\CC\cap \integers^n]$ can be formed.
This algebra is isomorphic to the algebra of polynomials generated by monomials whose exponent vectors are contained in $\CC\cap \integers^n$.
The linear function $w$ induces an $\mathbb{N}^n$-multigrading on $\mathbb{C}[\CC]$, which by Gordan's Lemma \cite[Section 6.1]{CMRings} is finitely generated -- this multigrading may be coarsened to an $\mathbb{N}$-grading in at least one way.
A finitely generated $\mathbb{N}$-graded $\mathbb{C}$-algebra $A$ is called \emph{Cohen-Macaulay} if there exists a polynomial subalgebra $\mathbb{C}[\theta_1,\ldots,\theta_n]\subset A$, generated by a system of parameters for $A$ composed of homogeneous elements $\theta_1,\ldots,\theta_n\in A$, over which $A$ is a finitely generated free module.
The class of \emph{Gorenstein} algebras (algebraically defined) is a sub-class of Cohen-Macaulay algebras, but we omit the definition here because it is rather technical.

The following is a special case of a theorem due to Mel Hochster.
\begin{theorem}[Hochster \cite{Hochster}]
For every pointed rational cone $\CC$, there is at least one linear functional $w$ leading to an $\mathbb{N}$-grading for which $\mathbb{C}[\CC]$ is Cohen-Macaulay.
\end{theorem}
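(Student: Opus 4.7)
The plan is to split the proof into two parts: construct an appropriate $\mathbb{N}$-grading, then prove that $\complex[\CC]$ under this grading is Cohen-Macaulay. The grading is the easy half; the genuine content is the ring-theoretic claim.

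For the grading, note that since $\CC$ is pointed and rational, its dual cone $\CC^\vee \subseteq (\reals^n)^*$ is full dimensional, hence has nonempty interior. I would choose any rational linear functional $w$ in the interior of $\CC^\vee$ and clear denominators so that $w$ takes integer values on $\integers^n$. Then $w$ is strictly positive on $\CC \setminus \{0\}$ and sends $\CC \cap \integers^n$ into $\mathbb{N}$. This induces an $\mathbb{N}$-grading on $\complex[\CC] = \complex[\CC \cap \integers^n]$ whose degree-$a$ piece is spanned by the monomials $x^{\ve u}$ with $w(\ve u) = a$. By Gordan's Lemma, $\CC \cap \integers^n$ is finitely generated, and the fiber $w^{-1}(a) \cap \CC$ is bounded and meets $\integers^n$ in only finitely many points, so the graded pieces are finite dimensional.

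For Cohen-Macaulayness, I would exploit the invariant-theoretic description of $\complex[\CC]$. Pointedness of $\CC$ ensures the semigroup $\CC \cap \integers^n$ is \emph{saturated}: any $\ve u \in \integers^n$ with $k \ve u \in \CC \cap \integers^n$ for some positive integer $k$ already satisfies $\ve u \in \CC \cap \integers^n$. Hence $\complex[\CC]$ is a normal Noetherian domain. Assuming without loss of generality that $\CC$ is full dimensional, let $\phi_1, \ldots, \phi_k$ denote the primitive integral facet normals and consider the injective map $\iota : \integers^n \hookrightarrow \integers^k$ sending $\ve u$ to $(\phi_1(\ve u), \ldots, \phi_k(\ve u))$. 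Then $\CC \cap \integers^n = \iota^{-1}(\mathbb{N}^k)$, and the sub-torus $T \subseteq (\complex^*)^k$ dual to the cokernel of $\iota$ acts on $\complex[y_1, \ldots, y_k]$ with invariant ring canonically isomorphic to $\complex[\CC]$. Since $T$ is linearly reductive, the Hochster--Roberts theorem applies and yields that $\complex[\CC]$ is Cohen-Macaulay.

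The principal obstacle is the passage from normality to Cohen-Macaulayness, which is the deep part of the theorem. In characteristic zero, the proof of Hochster--Roberts proceeds by building a Reynolds operator splitting of the inclusion $\complex[\CC] \hookrightarrow \complex[y_1, \ldots, y_k]$ as a map of $\complex[\CC]$-modules, so that Cohen-Macaulayness transfers from the regular overring. Alternatively, in any characteristic one can prove $F$-purity of $\complex[\CC] \otimes \overline{\mathbb{F}}_p$ via the explicit Frobenius splitting that projects onto monomials whose exponent vectors lie in $p \cdot (\CC \cap \integers^n)$, and then descend back to characteristic zero by spreading out. Either route requires nontrivial technical work, whereas the semigroup-theoretic setup above is essentially routine packaging.
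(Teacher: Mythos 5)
The paper does not prove this statement at all: it is quoted as a theorem of Hochster and used as a black box, so there is no internal argument to compare yours against. Your outline is nevertheless a recognized and logically valid route to the result. The grading half is correct and routine. For the Cohen--Macaulay half, your realization of $\complex[\CC]$ as the invariant ring of a diagonalizable subgroup of $(\complex^*)^k$ acting on $\complex[y_1,\dots,y_k]$ is accurate: $\iota$ is injective because the facet normals of a full-dimensional pointed cone span the dual space, and $\iota^{-1}(\mathbb{N}^k)=\CC\cap\integers^n$ is exactly the facet description of $\CC$, so the invariant monomials are precisely those indexed by $\iota(\CC\cap\integers^n)$. Hochster--Roberts then applies, and since Cohen--Macaulayness of a positively graded affine algebra is a property of the ring itself, independent of which admissible grading one chooses, this settles the statement for your $w$ and for any other.

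Three caveats. First, the entire mathematical content is deferred to the Hochster--Roberts theorem, which is deeper than, and historically later than, the statement being proved; Hochster's own 1972 proof is a direct combinatorial argument, so your route, while sound, is neither self-contained nor the one behind the citation. In particular, your remark that the Reynolds operator lets Cohen--Macaulayness ``transfer from the regular overring'' understates the difficulty: constructing the $\complex[\CC]$-module splitting is the easy step, and the assertion that a direct summand of a regular ring is Cohen--Macaulay \emph{is} the hard content of Hochster--Roberts, proved by reduction to positive characteristic. Second, saturation of $\CC\cap\integers^n$ follows from $\CC$ being a cone (closure under positive real scaling), not from pointedness; pointedness is what you need for the grading to exist and for $\complex[\CC]$ to have a unique graded maximal ideal. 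Third, the subgroup dual to the cokernel of $\iota$ need not be a torus when that cokernel has torsion, but it is still diagonalizable, hence linearly reductive in characteristic zero, so the argument survives unchanged. None of these is fatal, but as written the proposal is an outline that reduces the theorem to a harder one rather than a proof.
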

The next result, due to Richard Stanley \cite[Theorem 6.7]{StanleyHilbFns}, shows that the algebraic condition of $\mathbb{C}[\CC]$ being Gorenstein is equivalent to the geometric condition for Gorenstein introduced earlier.
\begin{theorem}[Stanley \cite{StanleyHilbFns}]
$\mathbb{C}[\CC]$ is Gorenstein (algebraically defined) if and only if $\CC$ is Gorenstein.
\end{theorem}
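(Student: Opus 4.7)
The plan is to reduce the statement to the classical Danilov--Stanley description of the canonical module of a normal semigroup algebra, and then read off that principality of that module is exactly the combinatorial Gorenstein condition from Section~\ref{overviewgorcone}.

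First I would record the setup. Since $\CC$ is a pointed rational cone, the semigroup $S := \CC \cap \integers^n$ is finitely generated (Gordan's Lemma) and normal, and the preceding theorem of Hochster furnishes a linear functional $w$ giving $\complex[\CC]$ the structure of an $\mathbb{N}$-graded Cohen--Macaulay $\complex$-algebra of Krull dimension $n$. For such an algebra $A$, the (algebraic) Gorenstein condition amounts to requiring that the graded canonical module $\omega_A$ be principal, i.e., isomorphic as a graded $A$-module to a single shift $A(-a)$ of $A$ itself. This is the piece of commutative algebra the authors declined to spell out, and it is essentially the only input one needs beyond what is already on the page.

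The technical heart of the argument is the Danilov--Stanley identification
\[
\omega_{\complex[\CC]} \;=\; \bigoplus_{\ve u \in \CC^\circ \cap \integers^n} \complex \cdot x^{\ve u},
\]
realized as an ideal of $\complex[\CC]$ on which multiplication by $x^{\ve v}$, $\ve v \in S$, acts as translation $\ve u \mapsto \ve u + \ve v$ of exponent vectors. I would cite this as the standard black box (a consequence of local duality for the normal semigroup ring together with Hochster's explicit description of the dualizing complex of $\complex[\CC]$); this is where essentially all of the genuine work lives, and it will be the main obstacle for anyone who wanted a fully self-contained proof.

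With that identification in hand, both implications become essentially tautological. If $\CC$ is geometrically Gorenstein at some $\c \in \CC^\circ \cap \integers^n$, so that $\CC^\circ \cap \integers^n = \c + S$, then the map $x^{\ve v} \mapsto x^{\c + \ve v}$ gives a graded $\complex[\CC]$-module isomorphism from $\complex[\CC](-w(\c))$ onto $\omega_{\complex[\CC]}$, whence $\complex[\CC]$ is algebraically Gorenstein. Conversely, if $\omega_{\complex[\CC]}$ is principal, its generator must be a monomial $x^{\c}$ (the module is spanned by monomials and $\complex[\CC]$ is a domain), $\c$ lies in $\CC^\circ \cap \integers^n$ by the Danilov--Stanley description, and the $\complex[\CC]$-module structure then forces every other interior lattice point to be of the form $\c + \ve v$ with $\ve v \in S$. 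This is precisely the geometric Gorenstein condition, completing the equivalence.
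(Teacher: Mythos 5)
The paper does not actually prove this statement: it is quoted directly from Stanley \cite[Theorem~6.7]{StanleyHilbFns} and used as a black box, so there is no in-paper argument to compare yours against. Your sketch is the standard proof of that cited theorem and is correct in outline: reduce the algebraic Gorenstein condition (for the Cohen--Macaulay ring furnished by Hochster's theorem) to principality of the graded canonical module, invoke the Danilov--Stanley identification of $\omega_{\complex[\CC]}$ with the span of the interior monomials, and translate principality of that monomial ideal into the condition $\CC^\circ\cap\integers^n=\c+(\CC\cap\integers^n)$. The one step you should tighten is the converse: the reason a principal canonical module must have a \emph{monomial} generator is not really that $\complex[\CC]$ is a domain, but that $\omega_{\complex[\CC]}$ is graded by the pointed (hence positive) affine semigroup $\CC\cap\integers^n$, so graded Nakayama with respect to the fine $\integers^n$-grading lets you choose the unique minimal generator homogeneous, i.e., equal to some $x^{\c}$ with $\c\in\CC^\circ\cap\integers^n$; comparing fine-graded components then gives $\CC^\circ\cap\integers^n\subseteq\c+(\CC\cap\integers^n)$, and the reverse inclusion is immediate from $\c\in\CC^\circ$. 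With that repair, and accepting the Danilov--Stanley description of the canonical module as the external input (which is where Stanley's actual work lies), your argument is a faithful reconstruction of the proof the paper chose to omit.
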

Theorem~\ref{thm:StanGor} follows by combining these two theorems with two more results of Stanley \cite[Theorems 4.4 and 6.1]{StanleyHilbFns}.
These characterize the (algebraic) Gorenstein condition for $\mathbb{C}[\CC]$ in terms of the self-reciprocity of the Hilbert function of $\mathbb{C}[\CC]$, in both the uni- and multi-variate case.
Since these Hilbert functions of $\mathbb{C}[\CC]$ are precisely the $f(q)$ and $g(x_1,\ldots,x_n)$ given above, Theorem~\ref{thm:StanGor} follows.

We now apply Theorem~\ref{thm:StanGor} to prove Theorem~\ref{thm:gorsym}.

\begin{proof}[Proof of Theorem~\ref{thm:gorsym}]
The weight function $w(\lambda)= |\lambda| = \sum_i \lambda_i$ satisfies the assumptions of Theorem~\ref{thm:StanGor}, and thus the first three conditions are equivalent.
The rationality of $f(q)$ and $g(x_1,\ldots,x_n)$ are well-known properties of integer point transforms for pointed rational cones \cite[Chapter 3]{ccd}.
\end{proof}


\subsection{Testing for the Gorenstein property}\label{testingsection}

The following lemma demonstrates how one can construct Gorenstein points for Gorenstein $\s$-lecture hall cones.

\begin{lemma}\label{lem:greedygorensteinlemma}
  Let $\CC = \{ \lambda\in\reals^n : \, A\lambda \geq 0 \}$ be a simple polyhedral
  cone, where $A$ is a lower triangular matrix with positive entries on the
  diagonal.  
  Denoting the rows of~$A$ as linear functionals
  $\alpha^1,\dots,\alpha^n$ on $\reals^n$, define a point $\ve c\in \CC^\circ$ by
  the following algorithm.
  For $1 \leq i \leq n$, choose $c_{i}\in\integers$ minimal so that $\alpha^i(\ve c)
  = \alpha^i(c_1,\dots, c_i) > 0$.  (This choice is possible because $A$ has
  positive entries on the diagonal.) 
  If $\CC$ is Gorenstein, then $\c$ is the unique Gorenstein point of~$\CC$.
\end{lemma}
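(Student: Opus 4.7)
The plan is to split the lemma into three pieces: (i) the greedy point $\c$ indeed lies in $\CC^\circ \cap \integers^n$; (ii) if a Gorenstein point exists at all then it is unique; and (iii) the greedy $\c$ agrees with any prescribed Gorenstein point coordinate-by-coordinate.

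Piece (i) is immediate from the construction, since $\alpha^i(\c) > 0$ for every $i$ by choice of $c_i$, and each $c_i$ is an integer. For (ii), if $\c^\ast$ and $\c^{\ast\ast}$ were both Gorenstein points, the defining property $\CC^\circ\cap\integers^n = \c^\ast+(\CC\cap\integers^n) = \c^{\ast\ast}+(\CC\cap\integers^n)$ would force both $\c^\ast-\c^{\ast\ast}$ and its negative to lie in $\CC$, and pointedness of $\CC$ would then force equality. Hence the real content of the lemma is (iii): given a Gorenstein point $\c^\ast$ whose existence is supplied by the hypothesis, one must show $\c=\c^\ast$.

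For (iii) I would argue by induction on $i$, leaning on the lower-triangularity of $A$. Suppose $c_j=c_j^\ast$ for $j<i$. One direction uses the Gorenstein property: by (i) and the assumption on $\c^\ast$, we have $\c-\c^\ast\in\CC\cap\integers^n$, and hence $\alpha^i(\c-\c^\ast)\ge 0$. Since $\alpha^i$ involves only the first $i$ coordinates and the earlier ones already agree, this reduces to $A_{ii}(c_i-c_i^\ast)\ge 0$, so $c_i\ge c_i^\ast$. The opposite inequality comes from greedy minimality: the inductive hypothesis together with $\alpha^i(\c^\ast)>0$ shows that $x=c_i^\ast$ is an integer solution to $\sum_{j<i}A_{ij}c_j+A_{ii}x>0$, so by the minimal choice of $c_i$ we have $c_i\le c_i^\ast$. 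Combining both bounds yields $c_i=c_i^\ast$ and completes the induction.

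I do not expect any serious obstacle here. The one delicate point is recognizing that the lower-triangularity of $A$ is exactly what makes the inequality $\alpha^i(\c-\c^\ast)\ge 0$ collapse, via the inductive hypothesis, to the single-variable comparison $A_{ii}(c_i-c_i^\ast)\ge 0$; once that observation is isolated, both halves of the induction step follow mechanically, and no further quantitative hypothesis on $A$ is needed beyond positivity of the diagonal.
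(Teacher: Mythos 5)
Your proposal is correct and follows essentially the same route as the paper: induction on the coordinate index, using lower-triangularity to collapse $\alpha^i(\c-\c^\ast)\ge 0$ to $A_{ii}(c_i-c_i^\ast)\ge 0$ for one inequality and greedy minimality together with $\alpha^i(\c^\ast)>0$ for the other. The only difference is your separate uniqueness step (ii) via pointedness, which is harmless but redundant, since step (iii) already shows every Gorenstein point coincides with the greedy $\c$.
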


\begin{proof}
  The point~$\c$ lies in the interior of~$\CC$ because $\alpha^i(\ve c) > 0$ for
  $i=1,\dots,n$.  Let $\CC$ be Gorenstein, and let $\hat\c$ be a Gorenstein
  point.  We prove the following property for all $j=0,\dots,n$ by induction:
  $\hat c_i = c_i$ for all $i\leq j$.  For $j=0$, nothing is to be shown.
  For $j>0$, since $\c \in \hat\c + (\CC\cap\integers^n)$ and  $\hat\c$ is in the interior of $\CC$, we find $$\alpha^j(c_1,\dots,c_{j-1},c_j)
  \geq \alpha^j(\hat c_1,\dots,\hat c_{j-1},\hat c_j) = \alpha^j(c_1,\dots,c_{j-1},\hat
  c_j) > 0.$$ 
  Thus $c_j \geq \hat c_j$, and, due to the minimal choice of $c_j$, actually
  $c_j = \hat c_j$. 
\end{proof}

The next lemma provides one of our main tools for checking the Gorenstein condition of an $\s$-lecture hall cone.


\begin{lemma}\label{lemma:shiny-gorenstein}
Let $\CC = \{ \lambda\in\reals^n : \, A\lambda \geq 0 \}$ be a full-dimensional simple polyhedral cone, 
where $A$ is a rational matrix and denote the rows of~$A$ as linear functionals $\alpha^1,\dots,\alpha^n$ on $\reals^n$.
For $j=1,\dots,n$, let the projected lattice  $\alpha^j (\integers^n) \subset \reals$ 
be generated by the number $q_j\in\rationals_{ >0 }$, so $\alpha^j (\integers^n) = q_j \integers$.  

\begin{enumerate}
\item
Then $\CC$ is Gorenstein if and only if there exists $\ve c \in \integers^n$
such that
$\alpha^j(\ve c) = q_j$ for all $j=1,\dots,n$.

\item
Define a point~$\ve{\tilde
    c} \in \CC\cap\rationals^n$ by $\alpha^j(\ve{\tilde c}) = q_j$ for
all $j=1,\dots,n$.
  Then $\CC$ is Gorenstein if and only if $\ve{\tilde c}\in\integers^n$.
\end{enumerate}
\end{lemma}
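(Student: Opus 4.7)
The plan is to observe that, since $\CC$ is full-dimensional and simple, the matrix $A$ is square and invertible; hence the system $A\ve{\tilde c} = (q_1,\dots,q_n)^T$ has a unique rational solution, so part~(2) is just a restatement of part~(1). I therefore focus on part~(1).

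For the easy direction, I would suppose $\c \in \integers^n$ with $\alpha^j(\c) = q_j$ for all $j$ and verify directly that $\c$ is a Gorenstein point. Since $q_j > 0$, the point $\c$ lies in $\CC^\circ$. Any interior lattice point $\ve v$ has each $\alpha^j(\ve v) \in q_j \integers_{>0}$, hence $\alpha^j(\ve v) \geq q_j = \alpha^j(\c)$, so $\ve v - \c \in \CC \cap \integers^n$. Conversely, for $\u \in \CC \cap \integers^n$ one has $\alpha^j(\c + \u) \geq q_j > 0$ for every $j$, so $\c + \u \in \CC^\circ$. This yields $\CC^\circ \cap \integers^n = \c + (\CC \cap \integers^n)$.

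For the converse, assume $\CC$ is Gorenstein with Gorenstein point $\c$. The bound $\alpha^j(\c) \geq q_j$ is immediate as above, since $\alpha^j(\c)$ is a positive element of $q_j\integers$. To prove the reverse inequality for a fixed $j$, I would construct an interior lattice point $\ve v_j$ with $\alpha^j(\ve v_j) = q_j$; the Gorenstein condition would then force $\ve v_j - \c \in \CC$, hence $\alpha^j(\c) \leq \alpha^j(\ve v_j) = q_j$.

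The main obstacle is building $\ve v_j$, and my plan is to navigate the kernel of $\alpha^j$ using the dual basis $\ve b^1,\dots,\ve b^n$ defined by $\alpha^i(\ve b^k) = \delta_{ik}$, which is rational since $A$ is rational and invertible. The vector $\ve y = \sum_{i \neq j} \ve b^i$ satisfies $\alpha^j(\ve y) = 0$ and $\alpha^i(\ve y) = 1$ for all $i \neq j$; clearing denominators gives $N \ve y \in \integers^n$ for some positive integer $N$. Picking any $\u \in \integers^n$ with $\alpha^j(\u) = q_j$ (which exists by the definition of $q_j$ as the generator of $\alpha^j(\integers^n)$) and setting $\ve v_j = \u + M N \ve y$ for $M \in \integers_{>0}$ sufficiently large makes $\alpha^j(\ve v_j) = q_j$ while $\alpha^i(\ve v_j) = \alpha^i(\u) + M N > 0$ for all $i \neq j$, so $\ve v_j \in \CC^\circ \cap \integers^n$, as required.
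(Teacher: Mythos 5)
Your proposal is correct and follows essentially the same route as the paper's proof: both directions hinge on the same two observations (a lattice point with $\alpha^j(\c)=q_j$ for all $j$ is automatically a Gorenstein point, and conversely one can push a lattice point $\ve z$ with $\alpha^j(\ve z)=q_j$ into the interior along $\ker\alpha^j$ to force $\alpha^j(\c)\le q_j$). The only cosmetic difference is that the paper uses the primitive integer generators $\ve b^i$ of the cone for this perturbation where you use the rational dual basis with denominators cleared, and the paper phrases the converse as a contradiction rather than as two inequalities.
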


\begin{proof}
  Let $\ve b^1,\dots,\ve b^n\in\integers^n$ be the primitive generators of the
  rational cone~$\CC$, i.e., each $\ve b^j$ is an integer vector whose coordinates do not have a common factor.
  We may (and will) assume further that $\ve b^1,\dots,\ve b^n$ are
  labeled such that we have the biorthogonality relation
  $\alpha^i(\ve b^j) = 0$ for $i\neq j$ and $\alpha^i(\ve b^i) > 0$.
  Without loss of generality, by scaling $\alpha^i$, we can assume that
  $\alpha^i(\ve b^i) = 1$.

  In order to prove the first statement, assume $\CC$ is Gorenstein, with Gorenstein point $\ve c$.
  Suppose that there exists an index $j$ such that $\alpha^j(\ve c) \neq q_j$
  (and so $\alpha^j(\ve c) > q_j$.)
  Because $\alpha^j (\integers^n) = q_j \integers$, there exists a point $\ve
  z \in \integers^n$ with $\alpha^j(\ve z) = q_j$.  For $i \neq j$, let
  $\kappa_i = 1+\max\{ 0, \lceil -\alpha^i(\ve z) \rceil \}$.  Then $\ve{\hat z}
  = \ve z + \sum_{i\neq j} \kappa_i \ve b^i \in \CC^\circ \cap \integers^n$ and
  $\alpha^j(\ve{\hat z}) = \alpha^j(\ve z) = q_j$.
  Since $\alpha^j(\ve{\hat z}) = q_j < \alpha^j(\ve c)$, we find that $\ve
  {\hat z}$ does not lie in $\ve c + \CC$, and thus $\CC$ is not Gorenstein.

  Now suppose that for some $\ve c \in \integers^n$, $\alpha^j(\ve c) = q_j$ for all $j=1,\dots,n$.
  Then  $\ve c \in \CC^\circ$.
  Let $\ve z\in
  \CC^\circ\cap \integers^n$, so $\alpha^j(\ve z) \in q_j\integers_{>0}$ for all $j=1,\dots,n$.
  Then $\alpha^j(\ve z-\ve c) \geq 0$ for all $j=1,\dots,n$, and so $\ve z \in
  \ve c+\CC$.  Thus $\CC$ is Gorenstein.

  The second statement follows easily.
\end{proof}

\begin{corollary}\label{cor:csrec}
For a positive integer sequence $\s$, the $\s$-lecture hall cone $\CC_n^{(\s)}$ is Gorenstein if and only if
there exists $\ve c \in \integers^n$ satisfying
\[
c_js_{j-1} = c_{j-1}s_j + \gcd(s_j,s_{j-1})
\]
for $j>1$, with $c_1=1$.
\end{corollary}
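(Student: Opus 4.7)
The plan is to apply Lemma~\ref{lemma:shiny-gorenstein}(1) directly to the $\s$-lecture hall cone, once we have rewritten the facet-defining inequalities of $\CC_n^{(\s)}$ with integer coefficients.

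First I would exhibit $\CC_n^{(\s)}$ as a full-dimensional simple polyhedral cone. Clearing denominators in the defining inequalities $0\leq \la_1/s_1 \leq \la_2/s_2 \leq \cdots \leq \la_n/s_n$ produces the $n$ linear functionals
\[
\alpha^1(\la) = \la_1, \qquad \alpha^j(\la) = s_{j-1}\la_j - s_j \la_{j-1} \quad \text{for } 2\leq j \leq n,
\]
and $\CC_n^{(\s)} = \{\la \in \reals^n : \alpha^j(\la)\geq 0 \text{ for all } j\}$. The matrix $A$ with these rows is lower triangular with positive diagonal entries $1, s_1, s_2, \dots, s_{n-1}$, hence is nonsingular, so $\CC_n^{(\s)}$ is indeed $n$-dimensional and simple. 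This puts us in the setting of Lemma~\ref{lemma:shiny-gorenstein}.

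Next I would compute the generator $q_j$ of the projected lattice $\alpha^j(\integers^n)\subset \reals$. For $j=1$, clearly $\alpha^1(\integers^n)=\integers$, so $q_1=1$. For $j\geq 2$, the image $\alpha^j(\integers^n) = s_{j-1}\integers + s_j \integers$ is the ideal in $\integers$ generated by $s_{j-1}$ and $s_j$, whose positive generator is $\gcd(s_j,s_{j-1})$; hence $q_j = \gcd(s_j,s_{j-1})$. By Lemma~\ref{lemma:shiny-gorenstein}(1), $\CC_n^{(\s)}$ is Gorenstein if and only if there exists $\ve c \in \integers^n$ with $\alpha^j(\ve c)=q_j$ for all $j$. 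Unpacking, this system reads $c_1 = 1$ and $s_{j-1}c_j - s_j c_{j-1} = \gcd(s_j,s_{j-1})$ for $j\geq 2$, which is exactly the stated condition after transposing.

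I do not foresee any genuine obstacle: the argument is a direct specialization of Lemma~\ref{lemma:shiny-gorenstein}(1). The only point worth being careful about is the computation of $q_j$ and the verification that the simple-cone hypothesis of the lemma is met by $\CC_n^{(\s)}$; both follow immediately from the lower-triangular structure of $A$ and the positivity of the $s_i$. Note also that any common rescaling of $\alpha^j$ rescales $q_j$ by the same factor, so the identity $\alpha^j(\ve c)=q_j$ is independent of our choice of integer normalization, which keeps the translation to the corollary's form unambiguous.
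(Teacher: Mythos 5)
Your proposal is correct and follows essentially the same route as the paper: both apply Lemma~\ref{lemma:shiny-gorenstein}(1) to the facet functionals of $\CC_n^{(\s)}$, the only difference being that you clear denominators (so $q_j=\gcd(s_j,s_{j-1})$) while the paper keeps $\alpha^j(\la)=\la_j/s_j-\la_{j-1}/s_{j-1}$ (so $q_j=1/\lcm(s_j,s_{j-1})$), and the two normalizations yield the identical recurrence $c_js_{j-1}-c_{j-1}s_j=\gcd(s_j,s_{j-1})$.
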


Note that the identity in Corollary \ref{cor:csrec} defines the numbers $c_j$ recursively, once more confirming that the Gorenstein point $\ve c$ is unique.
In \cite{BME2}, Bousquet--M\'elou and Eriksson proved a version of Corollary~\ref{cor:csrec} for non-decreasing sequences of positive integers
$\s$, using their proof of the generalized lecture hall theorem.   
Note in Corollary~\ref{cor:csrec} that we do not need the condition that $\s$ is non-decreasing.

\begin{proof}
  We use the characterization of Lemma~\ref{lemma:shiny-gorenstein}.  We have
  \begin{align*}
    \alpha^1 &= (\tfrac1{s_1},0,\dots,0), \\
    \alpha^2 &= (-\tfrac1{s_1},\tfrac1{s_2},0,\dots,0) \\
    \alpha^3 &= (0,-\tfrac1{s_2},\tfrac1{s_3},0,\dots,0) \\
    &\vdots\\
    \alpha^n &= (0,\dots,0,-\tfrac1{s_{n-1}},\tfrac1{s_{n}}) \, .
  \end{align*}
  So, $q_1=1/s_1$ and $q_j=1/\lcm(s_{j-1},s_j)$ for $2 \leq j \le n$.
  By Lemma~\ref{lemma:shiny-gorenstein}, $\CC$ is Gorenstein if and only if there exists $\ve c \in \integers^n$ satisfying
  \begin{align*}
    \alpha^1(\ve c) &= \frac1{s_1} \\
    \alpha^2(\ve c) &= \frac1{\lcm(s_1,s_2)} \\
    \alpha^3(\ve c) &= \frac1{\lcm(s_2,s_3)} \\
    &\vdots\\
    \alpha^n(\ve c) &= \frac1{\lcm(s_{n-1},s_{n})} \, .
  \end{align*}
Since  $\alpha^1(\ve c)= c_1/s_1$ and $\alpha^j(\ve c)= -c_{j-1}/s_{j-1} + c_{j}/s_{j}$ for $2 \leq j \le n$, this is equivalent to
$c_1=1$ and, for $2 \leq j \le n$,
\[
c_js_{j-1}-s_jc_{j-1} = \frac{s_{j-1}s_j}{\lcm(s_j,s_{j-1})} = \gcd(s_j,s_{j-1}) \, .
\qedhere
\]
\end{proof}

Corollary~\ref{cor:csrec} gives a recurrence for $c_j$ and therefore implies the following.

\begin{corollary}\label{cor:onenenough}
If for some $n$ the sequence $(c_1, \ldots, c_n)$ satisfies the condition of Corollary \ref{cor:csrec}, then $(c_1, \ldots, c_j)$ also satisfies it for all $1 \leq j \leq n$.
Thus, if an $\s$-lecture hall cone fails to be Gorenstein in some fixed dimension, it fails for all higher dimensions as well.
\end{corollary}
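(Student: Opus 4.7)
The plan is to exploit the strictly local nature of the system of equations that appears in Corollary~\ref{cor:csrec}. For each $j \geq 2$, the equation
\[
c_j s_{j-1} = c_{j-1} s_j + \gcd(s_j, s_{j-1})
\]
involves only $c_{j-1}$ and $c_j$, together with the prescribed entries $s_{j-1}$ and $s_j$ of the sequence $\s$; the condition on $c_j$ depends on the earlier coordinates $c_1, \ldots, c_{j-2}$ only through $c_{j-1}$, and the initial condition $c_1 = 1$ depends on nothing further. Hence my first step is the purely bookkeeping observation that if $\ve c = (c_1, \ldots, c_n) \in \integers^n$ satisfies the full system of $n-1$ equations and the initial condition, then for any $1 \leq j \leq n$ the truncated prefix $(c_1, \ldots, c_j) \in \integers^j$ satisfies the first $j-1$ equations plus $c_1 = 1$, and is thus a valid certificate in the sense of Corollary~\ref{cor:csrec} for the cone $\CC_j^{(\s)}$.

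For the ``consequently'' clause I would argue by contrapositive. Suppose $\CC_n^{(\s)}$ fails to be Gorenstein. By Corollary~\ref{cor:csrec}, no integer vector in $\integers^n$ satisfies the defining equations up through index~$n$. If some higher-dimensional cone $\CC_m^{(\s)}$ with $m > n$ were Gorenstein, its certifying integer vector in $\integers^m$ could be truncated to its first $n$ entries; by the previous paragraph this truncation would yield an integer certificate for $\CC_n^{(\s)}$, contradicting our assumption. Therefore $\CC_m^{(\s)}$ is non-Gorenstein for every $m \geq n$.

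There is no real obstacle here: the statement is essentially a formal consequence of the recursive, single-step structure already isolated in Corollary~\ref{cor:csrec}, together with the uniqueness of the Gorenstein point (already recorded after that corollary and in Lemma~\ref{lem:greedygorensteinlemma}). The only point that merits a line of verification is that integrality passes to the prefix, which is immediate since the coordinates of an integer vector are integers.
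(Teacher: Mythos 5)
Your proposal is correct and follows exactly the route the paper intends: the paper offers no written proof beyond the remark that Corollary~\ref{cor:csrec} gives a recurrence for $c_j$, and your argument simply spells out that reasoning (a prefix of an integer certificate is an integer certificate for the truncated system, so failure in dimension $n$ propagates to all $m > n$ by contrapositive). Nothing is missing.
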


\noindent
{\bf Example.}  Consider the sequence $\s=(1,3,18,81,405,1944,9477, \ldots)$ defined by the linear recurrence~\eqref{recdef} with $\ell=3$ and $b=9$.
The sequence of greatest common divisors $\gcd(s_i,s_{i-1})_{i \geq 1}$  is $(1,1,3,9,81,81,243, \ldots)$.  Applying Corollary~\ref{cor:csrec} to compute
the Gorenstein point $\ve c$ of $\CC_n^{(\s)}$ gives $\ve c=(1,4,25,113,566,2717,\frac{26491}{2}, \ldots)$.  Thus, by Corollaries~\ref{cor:csrec} and~\ref{cor:onenenough}, $\CC_n^{(\s)}$ is Gorenstein if and only if $n \leq 6$.


%

\subsection{A construction for Gorenstein lecture hall cones}\label{constructsection}

In this section we apply Corollary~\ref{cor:csrec} to 
show that there are many general sequences $\s$ that give rise to Gorenstein cones.
This will slightly generalize  Theorem \ref{BMEsr}.

Say that a sequence $\s$ of positive integers is \emph{$\u$-generated} by a sequence $\u$ of positive integers if
$s_2=u_1s_{1}-1$ and
 $s_{i+1}=u_is_{i}-s_{i-1}$ for $i > 1$.
For example, the $(k,\ell)$-sequences are $\u$-generated by
$\u=(\ell+1,k,\ell,k,\ell, \ldots)$. 

We will prove the following generalization of Theorem \ref{BMEsr}:

\begin{theorem}
Let $\s=(s_1, s_2, \ldots, s_n)$ be a sequence of positive integers such that $\gcd(s_i,s_{i+1})=1$
for $1 \leq i < n$.  Then $\CC_n^{(\s)}$ is Gorenstein if and only if
$\s$ is $\u$-generated by some sequence
$\u=(u_1, u_2, \ldots, u_{n-1})$ of positive integers.
When such a sequence exists, the Gorenstein point $\c$ for $\CC_n^{(\s)}$ is defined by
$c_1=1$, $c_2=u_1$, and for $2 \leq i < n$, $c_{i+1}=u_ic_i-c_{i-1}$.
\label{genBMEsr}
\end{theorem}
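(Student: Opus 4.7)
The plan is to reduce everything to Corollary~\ref{cor:csrec}, which under the coprimality hypothesis $\gcd(s_{j-1},s_j)=1$ simplifies the Gorenstein test to the existence of integers $c_1=1, c_2,\dots,c_n$ satisfying
\[
c_j s_{j-1} = c_{j-1} s_j + 1 \qquad (2 \le j \le n).
\]
I will treat both directions by analyzing this recurrence, at which point the three-term recurrence $c_{i+1}=u_i c_i - c_{i-1}$ emerges naturally.

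For the forward direction, I would start with the $j=2$ case: the relation $c_2 s_1 = s_2 + 1$ forces $s_1 \mid s_2+1$, so setting $u_1 := c_2 = (s_2+1)/s_1$ yields a positive integer with $s_2 = u_1 s_1 - 1$. For $j \ge 3$, I would combine the relations at indices $j-1$ and $j$ modulo $s_{j-1}$: rewriting them as $c_{j-1} s_{j-2} \equiv 1 \pmod{s_{j-1}}$ (which also gives $\gcd(c_{j-1},s_{j-1})=1$) and $c_{j-1} s_j \equiv -1 \pmod{s_{j-1}}$, then adding to get $c_{j-1}(s_j + s_{j-2}) \equiv 0 \pmod{s_{j-1}}$, hence $s_{j-1} \mid s_j + s_{j-2}$. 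Setting $u_{j-1} := (s_j + s_{j-2})/s_{j-1}$, positivity of $\s$ makes $u_{j-1}$ a positive integer, and we have $s_j = u_{j-1} s_{j-1} - s_{j-2}$, as required.

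For the reverse direction, assuming $\s$ is $\u$-generated, I would define $\c$ by $c_1=1$, $c_2=u_1$, $c_{i+1}=u_i c_i - c_{i-1}$, and verify by induction on $j$ that $c_j s_{j-1} - c_{j-1} s_j = 1$. The base case $j=2$ is just $u_1 s_1 - s_2 = 1$. For the inductive step, expanding using both the $\c$- and the $\s$-recurrences telescopes:
\[
c_{j+1} s_j - c_j s_{j+1} = (u_j c_j - c_{j-1}) s_j - c_j (u_j s_j - s_{j-1}) = c_j s_{j-1} - c_{j-1} s_j = 1.
\]
Corollary~\ref{cor:csrec} then concludes that $\CC_n^{(\s)}$ is Gorenstein, and uniqueness of the Gorenstein point (established in Lemma~\ref{lem:greedygorensteinlemma}) identifies $\c$ as the one described.

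I do not anticipate a genuine obstacle: once the coprime simplification of Corollary~\ref{cor:csrec} is in place, the forward direction is modular arithmetic and the reverse is a one-line telescoping induction. The only care point is to verify that the $u_j$ produced in the forward direction are positive integers (not merely rationals), which is automatic from $s_{j-1} \mid s_j + s_{j-2}$ together with the positivity of $\s$.
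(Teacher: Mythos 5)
Your proposal is correct and follows essentially the same route as the paper: both reduce to Corollary~\ref{cor:csrec}, combine the Bezout-type relations at consecutive indices to get $\gcd(c_j,s_j)=1$ and hence $s_j \mid s_{j-1}+s_{j+1}$, and read off the recurrence $c_{i+1}=u_i c_i - c_{i-1}$ for the Gorenstein point. The only (minor, welcome) difference is that you spell out the converse direction as an explicit telescoping induction, which the paper leaves largely implicit in its simultaneous derivation of the two equivalent recurrences.
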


Consider the sequence $\s=(1,3,2,1,3,2,1,3,2,1, \ldots)$ which is not monotone and therefore not covered by Theorem \ref{BMEsr}.
It  is $\u$-generated by $\u=(4,1,2,5,1,2,5,1,2,5, \ldots )$,
and therefore by Theorem~\ref{genBMEsr}, $\CC_n^{(\s)}$ is Gorenstein and
its Gorenstein point
 is
$\c=(1,4,3,2,7,5,3,10,7, \ldots)$.

As another example,  
the ``$1 \bmod k$'' sequences
\[
\s \ = \ (1, \ k+1, \ 2k+1, \ \ldots, \ (n-1)k + 1)
\]
were studied in \cite{gopal}, and in \cite{PS} it was conjecured that the 
$h^*$-vector of $\RR_n^{(\s)}$ is always symmetric.
These $1 \bmod k$  sequences are $\u$-generated by
$\u=(k+2,2,2,2,\ldots)$ and therefore, by Theorem~\ref{genBMEsr}, give rise to Gorenstein cones.
In Section~\ref{ehrhartsection}, we will show that this implies symmetry of the $h^*$-vector.
The Gorenstein point is $(1,k+2,2k+3,3k+4, \ldots)$.

\begin{proof}[Proof of Theorem~\ref{genBMEsr}]
We adapt the method of \cite{BME2}.
By Corollary~\ref{cor:csrec}, $\CC_n^{(\s)}$ is Gorenstein if and only if
there exists $\ve c \in \integers^n$ satisfying
\[
c_js_{j-1} = c_{j-1}s_j + \gcd(s_j,s_{j-1})
\]
for $j>1$, with $c_1=1$.
Since  $\gcd(s_i,s_{i+1})=1$, we have that, for $1 < j < n$,
\[
c_js_{j-1}-c_{j-1}s_{j} = 1 = c_{j+1}s_{j} - c_js_{j+1}.
\] 
So, we can conclude that $c_j$ and $s_j$ are relatively prime and
\[
c_j(s_{j-1}+s_{j+1})=s_j(c_{j+1}+c_{j-1}).
\]
But then, since $\gcd(c_j,s_j)=1$, $s_{j-1}+s_{j+1}$ must be a multiple of $s_j$, i.e., for some positive integer $u_j$,
\[
s_{j+1}=u_js_j-s_{j-1}.
\]
Since $c_2s_1=s_2c_1+1=s_2+1$, setting $u_1=(s_2+1)/s_1=c_2$ ensures that $\s$ is $\u$-generated.

For the Gorenstein point,  we have $c_1=1$, $c_2=u_1$ and for $j>1$,  
\[
s_j(c_{j+1}+c_{j-1}) = c_j(s_{j-1}+s_{j+1}) = c_ju_js_j,
\]
so $c_{j+1}+c_{j-1}=c_ju_j$, as claimed.
\end{proof}

\section{The Gorenstein condition and second order linear
recurrences}\label{gorensteinconditionsection}



Assume that for nonzero integers  $\ell$ and $b$, the sequence $\s$ is defined recursively through
\begin{equation}\label{eq:sseqdef}
  s_0= 0, \ s_1=1, 
  \qquad \text{ and } \qquad
  s_j=\ell s_{j-1}+b s_{j-2} \ \text{ for } \ j\geq 2 \, .
\end{equation}
If $\ell \geq 2$ and $b=-1$, $\s$ is an $\ell$-sequence as noted in~\eqref{l-sequencesdef}.
In this section we will prove that among sequences of {\em positive} integers
$\{s_j\}_{j \geq 1}$ defined by second order linear recurrences with initial
conditions 0 and 1, only the $\ell$-sequences give rise to Gorenstein
lecture hall cones in every dimension $n$.

We first establish the conditions on $\ell$ and $b$ which guarantee that 
$s_j > 0$ for all $j \geq 1$.
The {\em characteristic equation} of the recurrence \eqref{eq:sseqdef}    is $x^2 - \ell x - b$ and its roots $u$ and $v$ are
\[
u = \frac{\ell + \sqrt{\ell^2 + 4b}}{2}; \ \ \ \ 
v = \frac{\ell - \sqrt{\ell^2 + 4b}}{2}.  
\]
These roots define $s_n$, when $u \not = v$, by
\[ s_n = \frac{u^n-v^n}{u-v},
\]
and when  $u=v$, by
\[
 s_n = nu^{n-1}.
\]
The {\em discriminant} of the characteristic polynomial is $D= \ell^2+4b$.
The following relations can be easily checked:
$$u-v = \sqrt{D} > 0;\ \ \ \   u+v=\ell; \ \ \ uv=-b.$$

\begin{proposition}\label{prop:positiveterms}
Let $\s$ be defined through \eqref{eq:sseqdef}  for nonzero integers $\ell$ and $b$.
Then
$s_j > 0$ for all $j \geq 1$ if and only if $\ell > 0$ and $D=\ell^2 + 4b \geq 0$.
\end{proposition}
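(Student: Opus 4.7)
The plan is to analyze $s_n$ directly via the closed-form expression given just before the proposition, splitting into cases according to the sign of the discriminant $D = \ell^2 + 4b$.

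For the \emph{if} direction, suppose $\ell > 0$ and $D \geq 0$. When $D = 0$, the roots coincide at $u = v = \ell/2 > 0$, so $s_n = nu^{n-1} > 0$ for $n \geq 1$, and we are done. When $D > 0$, the roots are real with $u > v$, $u+v = \ell > 0$, which forces $u > 0$, and moreover $u > |v|$ (since $u+v > 0$ and $u > v$ together give $u > -v$). From $s_n = (u^n - v^n)/(u-v)$ with $u-v = \sqrt{D} > 0$, I would verify positivity by observing that $u^n > |v|^n \geq v^n$ for every $n \geq 1$: when $v \geq 0$ this is immediate from $u > v \geq 0$, and when $v < 0$ this follows from $u > |v|$ (separating the parities of $n$ if desired). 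Hence $s_n > 0$.

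For the \emph{only if} direction, the easy half is $\ell > 0$: since $s_2 = \ell s_1 + b s_0 = \ell$, positivity of $s_2$ gives $\ell > 0$ immediately. The main obstacle is ruling out $D < 0$. Suppose for contradiction that $D < 0$; then $u, v = \overline{u}$ are complex conjugates with $uv = |u|^2 = -b > 0$. Writing $u = \rho e^{i\theta}$ with $\rho = \sqrt{-b} > 0$ and, because $\ell > 0$ forces $\operatorname{Re}(u) > 0$ and $\operatorname{Im}(u) > 0$, we have $\theta \in (0, \pi/2)$. A short calculation from the formula $s_n = (u^n - v^n)/(u-v)$ yields
\[
s_n \;=\; \rho^{n-1}\,\frac{\sin(n\theta)}{\sin\theta}.
\]
Since $\theta > 0$, the sequence $\{n\theta \bmod 2\pi\}_{n \geq 1}$ eventually lands in the interval $(\pi, 2\pi)$ (either because $\theta/\pi$ is rational, in which case $\sin(n\theta)$ is periodic and must take values $\leq 0$, or because $\theta/\pi$ is irrational, in which case the sequence is equidistributed modulo $2\pi$). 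In either case there exists $n \geq 1$ with $\sin(n\theta) \leq 0$, giving $s_n \leq 0$ and contradicting our standing assumption. Therefore $D \geq 0$.

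I expect the complex-root case above to be the only subtle step; everything else is direct algebraic manipulation of the closed form. Assembling the two directions completes the proof.
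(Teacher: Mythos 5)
Your proof is correct, and it diverges from the paper's in exactly one place: the case $D<0$. For the forward direction (and for extracting $\ell>0$ from $s_2=\ell$) your argument is essentially identical to the paper's --- the $D=0$ case via $s_n=n(\ell/2)^{n-1}$ and the $D>0$ case via $u>|v|$ are the same computation the paper does with $s_n=\frac{u^n}{\sqrt D}\bigl(1-(v/u)^n\bigr)$. Where you differ is that the paper simply cites an external result (\cite[Lemma~5]{HHH}) to conclude that $D<0$ forces some $s_j<0$, whereas you prove this from scratch by writing the complex-conjugate roots in polar form and obtaining $s_n=\rho^{n-1}\sin(n\theta)/\sin\theta$ with $\theta\in(0,\pi/2)$. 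That buys self-containedness at the cost of a little trigonometry; it is a standard and correct derivation. One remark: your appeal to periodicity versus equidistribution of $n\theta \bmod 2\pi$ is heavier than needed. Since $0<\theta<\pi/2$, the sequence $n\theta$ increases in steps shorter than the interval $(\pi,2\pi)$, so if $n$ is minimal with $n\theta>\pi$ then $n\theta\le\pi+\theta<2\pi$, giving $\sin(n\theta)<0$ directly. With that simplification your argument is arguably cleaner than the paper's, which leaves the key negative case to a reference.
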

\begin{proof}
If $\ell^2 + 4b < 0$, then, as is shown in \cite[Lemma~5]{HHH},
there exists $j \geq 1$, such that $s_j < 0$. (Actually, this is shown to be true for any initial conditions).
If $\ell \leq 0$, then $s_2=\ell s_1 + bs_0 = \ell \leq  0$.

So assume $D \geq 0$ and $\ell > 0$.  If $D=0$, then $u= \ell/2$  and 
$s_n = n(\ell/2)^{n-1}$ which is positive for $n \geq 1$ since $\ell$ is.
Otherwise, $D>0$ and, since $\ell > 0$, we have $u > 0$ and  $|v/u|<1$.  So
we have
\[
s_n = \frac{u^n}{\sqrt{D}} \left ( 1 - (v/u)^n \right ) > 0.
\]
\end{proof}

Note that if $\ell > 0$, but $b=0$ in \eqref{eq:sseqdef}, then
$s_n = \ell^{n-1}$ for $n \geq 1$.  In this case, 
$\CC_n^{(\s)}$ is Gorenstein for all $n \geq 0$, with Gorenstein point
$\c=(c_1, \ldots, c_n)$, where $c_i= 1 + \ell + \ell^2 + \cdots + \ell^{n-1}$.

Our main contribution is the following.

\begin{theorem} \label{thm:lseqgorconj}
Let $\ell > 0$ and $b \not = 0$ be integers satisfying $\ell^2+4b \geq 0$.
Let $\s=(s_1, s_2, \ldots)$ be defined by
$$s_n = \ell s_{n-1} + b s_{n-2} \, ,$$
with initial conditions $s_1=1$, $s_0=0$.
Then  $\CC_n^{(\s)}$ is Gorenstein for all $n \geq 0$ if and only if $b=-1$.
If $b \not = -1$, there exists $n_0 = n_0(b,\ell)$ such that 
$\CC_n^{(\s)}$
 fails to be Gorenstein
for all $n \geq n_0$.
\label{ell-Gorenstein}
\end{theorem}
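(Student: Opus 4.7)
The plan is to apply Corollary~\ref{cor:csrec}: defining the rational sequence $(c_j)$ by $c_1=1$ and $c_j s_{j-1} = c_{j-1} s_j + d_j$, where $d_j := \gcd(s_j, s_{j-1})$, the cone $\CC_n^{(\s)}$ is Gorenstein for all $n$ if and only if every $c_j$ is an integer; by Corollary~\ref{cor:onenenough}, a single non-integer $c_{n_0}$ gives the claimed failure for all $n \geq n_0$.

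For the forward direction, $b=-1$ makes $\s$ an $\ell$-sequence, and the classical identity $s_{j-1}^2 - s_j s_{j-2} = 1$ forces $d_j=1$ for all $j \geq 2$ and makes $c_j := s_j + s_{j-1}$ satisfy the Gorenstein recurrence of Corollary~\ref{cor:csrec}. Hence $\CC_n^{(\s)}$ is Gorenstein for every $n$.

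For the backward direction, combining $c_{j+1} s_j = c_j s_{j+1} + d_{j+1}$ with the recurrences $s_{j+1} = \ell s_j + b s_{j-1}$ and $c_j s_{j-1} = c_{j-1} s_j + d_j$ yields the reduction
\[
c_{j+1} = \ell c_j + b c_{j-1} + \frac{b d_j + d_{j+1}}{s_j},
\]
so $\CC_n^{(\s)}$ is Gorenstein for every $n$ if and only if $s_j \mid b d_j + d_{j+1}$ for every $j \geq 2$. A short gcd lemma then shows that any prime dividing some $d_j$ must divide both $\ell$ and $b$ (if $p \mid d_j$ and $p \nmid b$, a descent forces $p \mid s_1 = 1$; once $p \mid b$, the recurrence mod $p$ gives $s_k \equiv \ell^{k-1} \pmod p$, so $p \mid \ell$). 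Consequently $d_{j+1} = d_j \cdot \gcd(b, s_j/d_j)$, and the identity $s_{j-1}^2 - s_j s_{j-2} = (-b)^{j-2}$ gives $d_j \mid b^{j-2}$. When $\gcd(\ell, b) = 1$ we therefore have $d_j = 1$ for all $j$, the divisibility collapses to $s_j \mid b+1$, and since $b+1 \neq 0$ and $s_j \to \infty$, this fails for all large $j$.

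The remaining case $\gcd(\ell, b) > 1$ is the main obstacle, since $d_j$ may grow. Writing $\alpha_j = s_j/d_j$ and $g_j = \gcd(b, \alpha_j) = d_{j+1}/d_j$, the condition $s_j \mid b d_j + d_{j+1}$ simplifies to $\alpha_j \mid b + g_j$, equivalently $(\alpha_j/g_j) \mid (b/g_j + 1)$ with $\gcd(\alpha_j/g_j,\, b/g_j) = 1$. The hard step is showing this divisibility is violated for some $j$ whenever $b \neq -1$: I would combine the exponential growth $s_j \sim u^j$ with $u = (\ell + \sqrt{\ell^2+4b})/2$ against the bound $d_j \mid b^{j-2}$ to control the $p$-adic valuations of $d_j$ at primes $p \mid \gcd(\ell, b)$ and force $\alpha_j/g_j$ to eventually exceed $|b/g_j + 1|$. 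A separate argument must rule out the trivializing possibility $b/g_j + 1 = 0$, which can only occur when $b < 0$ and $-b \mid \alpha_j$; I expect this configuration cannot persist for all large $j$ unless $b = -1$, which is precisely the delicate gcd analysis promised in the overview.
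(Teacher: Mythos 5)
Your setup is sound and, up to notation, reproduces the paper's reduction: your identity $c_{j+1}=\ell c_j+b c_{j-1}+\frac{bd_j+d_{j+1}}{s_j}$ is correct, and the resulting condition $\frac{s_j}{d_j}\;\big|\;b+\frac{d_{j+1}}{d_j}$ is exactly the paper's divisibility criterion \eqref{divides_condition} (with $d_j=g_j$), derived there instead by combining the Gorenstein recurrence for consecutive indices and using $\gcd(c_j,s_j/g_j)=1$. Your observations that $d_{j+1}=d_j\gcd(b,s_j/d_j)$ and that every prime dividing some $d_j$ divides $\gcd(\ell,b)$ are also correct, and they dispose of the case $\gcd(\ell,b)=1$ completely. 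The forward direction for $b=-1$ is fine (it also follows from Theorem~\ref{genBMEsr}).

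The gap is the case $\gcd(\ell,b)>1$, which you explicitly leave as a sketch, and your proposed tool for it does not suffice. You want to force $\alpha_j=s_j/d_j\to\infty$ by playing $s_j\sim u^j$ against $d_j\mid b^{\,j-2}$, but $u>|b|$ fails whenever $b>\ell>0$ (e.g.\ $\ell=2$, $b=4$ gives $u=1+\sqrt5<4$), so $b^{\,j-2}$ eventually dwarfs $s_j$ and the bound says nothing. The paper needs the much sharper structure of Section~\ref{gorensteinconditionsection}: with $r=\gcd(\ell,b)$, $t=\gcd(\ell^2/r,b/r)$, $\sigma=r/t$, one proves $t^{\,n-2}\sigma^{\lfloor (n-1)/2\rfloor}\mid d_n\mid t^{\,n-1}\sigma^{\lfloor (n-1)/2\rfloor}$ (Lemmas~\ref{sdivisor}, \ref{gcd_lemma_1}, \ref{frec}, \ref{gcd_lemma_2}) and that $s_n/(t^{\,n-2}\sigma^{\lfloor (n-1)/2\rfloor})\to\infty$ (Lemma~\ref{lemma:growth}); only then is the right-hand side $b+d_{j+1}/d_j$ bounded by $t(r+|b|)$ while the left-hand side grows. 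Second, even granting growth, your criterion is vacuous whenever $b+g_j=0$, i.e.\ $d_{j+1}/d_j=-b$ for all large $j$; ruling this out is precisely the endgame of Theorem~\ref{thm:case2}, where imposing $b=-u_{n+1}t\sigma^{\lfloor n/2\rfloor-\lfloor (n-1)/2\rfloor}/u_n$ for two consecutive values of $n$ (hence both parities, via $u_n\mid t$ and $r=t\sigma$) forces $t=1$ and $b=-t^2=-1$. You flag this case but offer only an expectation that it cannot persist; that expectation is the theorem. As written, the proposal proves the result only for $\gcd(\ell,b)=1$.
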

\noindent

The proof of Theorem \ref{thm:lseqgorconj}  depends completely  on Corollaries \ref{cor:csrec} and 
\ref{cor:onenenough}. The key is to understand exactly how
$\ell$ and $b$ affect $\gcd(s_{n+1},s_n)$. This is the focus of Section 3.1.
Theorem \ref{thm:lseqgorconj} is proved in Section 3.2


\subsection{The effect of $\ell$ and $b$ on $\gcd(s_{n+1},s_n)$}


For integers $x$ and $y$ with $x \not = 0$,
we use the notation $x|y$ to mean that $y$ is divisible by $x$.
The greatest common divisor of two integers $x$ and $y$, 
\[\gcd(x,y) = \max \{z \in \integers \ | \  z | x \ {\rm and} \ z | y \}, \]
is defined as long as at least one of $x,y$ is nonzero. It is always a positive integer. 

\begin{definition} 
For nonzero integers 
$\ell$ and $b$, 
define
\begin{align*}
r = &  \gcd(\ell,b); \\
t = &  \gcd(\ell^2/r,b/r); \\
\sigma = & r/t; \\
\gamma = & \ell/r;\\
\beta =& b/(rt). 
\end{align*}
\label{case2defs}
\end{definition}

Note that $r$, $t$, and $\sigma$ are positive integers, whereas 
$\gamma$ and $\beta$ are integers whose signs agree with those of
$\ell$ and $b$, respectively.  The facts in the following two propositions are easily checked.

\begin{proposition} In Definition \ref{case2defs},
\begin{align*}
 \gcd(\ell,b)  \ =& \   \ r  \ = \sigma t;\\
\gcd(\ell^2,b) \ =& \   \ rt  \ = \sigma t^2;\\
\ell \ =& \ r \gamma \  =  \ \sigma t \gamma;\\
b \ =& \ rt \beta \  =  \ \sigma t^2 \beta;\\
\gcd(\gamma,\beta) \ =&   \  1;\\
\gcd(\gamma,t) \ =&   \  1;\\
\gcd(\sigma,\beta) \  =& \  1.
\end{align*}
\label{ell_b_facts}
\end{proposition}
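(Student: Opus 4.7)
The plan is to unwind the definitions via elementary greatest-common-divisor bookkeeping. The two substantive claims --- that $t \mid r$ (so $\sigma = r/t$ is a positive integer) and that $rt \mid b$ (so $\beta = b/(rt)$ is an integer) --- will drop out once we rewrite $t$ in a more usable form. Every remaining identity and coprimality statement then follows quickly.

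I would first set $r = \gcd(\ell,b)$ and factor $\ell = r\gamma$, $b = rb'$, where the coprimality of cofactors $\gcd(\gamma, b') = 1$ is automatic from the definition of the greatest common divisor. Substituting into the definition of $t$ gives
\[
t \ =\ \gcd(\ell^2/r,\, b/r)\ =\ \gcd(r\gamma^2,\, b')\ =\ \gcd(r,\, b'),
\]
where the last equality uses $\gcd(\gamma, b') = 1$. From this single reduction I read off immediately that $t \mid r$ and $t \mid b'$, so $\sigma = r/t$ and $\beta = b'/t$ are nonzero integers, yielding $\ell = r\gamma = \sigma t \gamma$ and $b = rt\beta = \sigma t^2 \beta$. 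A further short computation, $\gcd(\ell^2, b) = \gcd(r^2\gamma^2, rb') = r\cdot\gcd(r\gamma^2, b') = rt$, handles the remaining identity.

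For the three coprimality claims I will argue as follows. From $\beta t = b'$, any common divisor of $\gamma$ and $\beta$ divides $b'$; combined with $\gcd(\gamma, b') = 1$, this forces $\gcd(\gamma, \beta) = 1$. Since $t \mid b'$, the same argument with $t$ in place of $\beta$ gives $\gcd(\gamma, t) = 1$. Finally, $\gcd(\sigma, \beta) = \gcd(r/t,\, b'/t) = 1$ by the standard fact that dividing two integers by their gcd produces coprime quotients, and here $t = \gcd(r, b')$.

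There is really no hard step in this proof; the only point requiring a moment of care is the reduction $\gcd(r\gamma^2, b') = \gcd(r, b')$, which rests entirely on the coprimality $\gcd(\gamma, b') = 1$ extracted in the first step.
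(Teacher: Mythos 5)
Your proof is correct. The paper offers no argument for this proposition at all --- it simply states that the facts ``are easily checked'' --- so there is nothing to compare against; your reduction $t=\gcd(\ell^2/r,\,b/r)=\gcd(r\gamma^2,\,b')=\gcd(r,\,b')$ (using $\gcd(\gamma,b')=1$) is exactly the kind of routine gcd bookkeeping the authors had in mind, and it correctly delivers the integrality of $\sigma$ and $\beta$ together with all three coprimality claims.
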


\begin{proposition}
If $\gcd(\ell,b)=\gcd(\ell^2,b)=r$, then  $t=1$
and $r=\sigma$.
In this case,
\begin{align*}
\ell \ =& \ r \gamma;\\
b \ =& \ r \beta; \\
\gcd(\gamma,\beta) \  =&\  1;\\
\gcd(r,\beta) \ =&\ 1.
\end{align*}
\label{t_eq_1_note}
\end{proposition}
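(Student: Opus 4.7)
The plan is to derive Proposition~\ref{t_eq_1_note} by directly unpacking Definition~\ref{case2defs} under the stronger hypothesis $\gcd(\ell,b)=\gcd(\ell^2,b)=r$ and then re-using the identities already verified in Proposition~\ref{ell_b_facts}. First I would observe that since $r=\gcd(\ell,b)$ divides both $\ell$ and $b$, it certainly divides $\ell^2$ and $b$, so the quotients $\ell^2/r$ and $b/r$ are integers. Pulling the common factor out of the gcd then gives
\[
\gcd\!\left(\tfrac{\ell^2}{r},\tfrac{b}{r}\right) \;=\; \frac{\gcd(\ell^2,b)}{r} \;=\; \frac{r}{r} \;=\; 1.
\]
By the definition of $t$ in Definition~\ref{case2defs}, this says exactly that $t=1$, and therefore $\sigma = r/t = r$ as claimed.

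From there the remaining assertions are immediate substitutions into the formulas of Proposition~\ref{ell_b_facts}. Plugging $t=1$ and $\sigma=r$ into $\ell = \sigma t \gamma$ and $b = \sigma t^2 \beta$ yields $\ell = r\gamma$ and $b = r\beta$. The coprimality $\gcd(\gamma,\beta)=1$ is carried over verbatim from Proposition~\ref{ell_b_facts}, and the coprimality $\gcd(\sigma,\beta)=1$ from that proposition becomes $\gcd(r,\beta)=1$ under $\sigma = r$.

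There is essentially no obstacle in the argument: the only non-trivial ingredient is the elementary identity $\gcd(a/d,c/d) = \gcd(a,c)/d$ whenever $d$ is a common divisor of $a$ and $c$, which is standard. Everything else is bookkeeping, and the proposition can be written in just a few lines.
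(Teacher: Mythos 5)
Your proof is correct and is exactly the routine verification the paper has in mind when it says the facts in Propositions~\ref{ell_b_facts} and~\ref{t_eq_1_note} are ``easily checked'': the key step $\gcd(\ell^2/r,\,b/r)=\gcd(\ell^2,b)/r=1$ gives $t=1$ and $\sigma=r$, and the remaining claims follow by substituting into Proposition~\ref{ell_b_facts}. Nothing is missing.
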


\begin{lemma}
For $\s$ defined by \eqref{eq:sseqdef}, assume
that $\ell$ and $b$ are nonzero integers.
Then for $n \geq 1$, $s_n$ is divisible by $t^{n-1}\sigma^{\lfloor n/2 \rfloor}$.
\label{sdivisor}
\end{lemma}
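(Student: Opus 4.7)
The plan is to proceed by strong induction on~$n$, using the factorizations $\ell = \sigma t \gamma$ and $b = \sigma t^2 \beta$ from Proposition~\ref{ell_b_facts} to track the powers of $t$ and $\sigma$ contributed by each step of the recurrence. The base cases are immediate: $s_1 = 1$ matches $t^0\sigma^0 = 1$, and $s_2 = \ell = \sigma t \gamma$ matches $t^1\sigma^1$.

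For the induction step ($n \geq 3$), write $s_{n-1} = t^{n-2}\sigma^{\lfloor(n-1)/2\rfloor}a_{n-1}$ and $s_{n-2} = t^{n-3}\sigma^{\lfloor(n-2)/2\rfloor}a_{n-2}$ for integers $a_{n-1}, a_{n-2}$. Substituting into $s_n = \ell s_{n-1} + b s_{n-2}$ yields
\[
s_n \;=\; t^{n-1}\sigma^{\lfloor(n-1)/2\rfloor+1}\gamma\, a_{n-1} \;+\; t^{n-1}\sigma^{\lfloor(n-2)/2\rfloor+1}\beta\, a_{n-2}.
\]
Every summand already carries $t^{n-1}$, so the remaining question is how many factors of $\sigma$ can be pulled out of the sum.

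A short parity analysis settles this. If $n$ is even, then $\lfloor(n-1)/2\rfloor = \lfloor(n-2)/2\rfloor = n/2-1$, so both summands are divisible by $t^{n-1}\sigma^{n/2} = t^{n-1}\sigma^{\lfloor n/2\rfloor}$, as required. If $n$ is odd, then $\lfloor(n-1)/2\rfloor = (n-1)/2$ while $\lfloor(n-2)/2\rfloor = (n-3)/2$, so the first summand carries one extra $\sigma$ but both are still divisible by $t^{n-1}\sigma^{(n-1)/2} = t^{n-1}\sigma^{\lfloor n/2\rfloor}$.

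The whole argument is essentially bookkeeping, and I anticipate no real obstacle. The appearance of the floor in the exponent of $\sigma$ is the natural consequence of the fact that the two summands' $\sigma$-exponents can differ by at most one: when $n$ is even they coincide and the full power $\sigma^{n/2}$ can be extracted, while when $n$ is odd they are off by one and only the smaller power $\sigma^{(n-1)/2}$ survives as a common factor.
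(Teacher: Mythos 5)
Your proof is correct and follows essentially the same route as the paper's: strong induction with the same base cases, the same factorizations $\ell=\sigma t\gamma$ and $b=\sigma t^2\beta$ from Proposition~\ref{ell_b_facts}, and the same even/odd case split; the only difference is cosmetic (you factor the inductive divisibility out of each summand and compare $\sigma$-exponents, where the paper divides $s_n$ by $t^{n-1}\sigma^{\lfloor n/2\rfloor}$ and checks that the resulting fractions are integers). No gaps.
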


\begin{proof}
We use induction on $n$.
When $n=1$, $t^{1-1}\sigma^{\lfloor \frac 1 2 \rfloor}= 1$, which divides (any integer) $s_1$.  For $n=2$, $t^{2-1}\sigma^{\lfloor 2/2 \rfloor}= t \sigma=r$,
which divides $\ell s_1 + bs_0=s_2$ (for any integers $s_0,s_1)$.

Let $n \geq 3$ and assume the claim true for smaller values.
Then by the recurrence for $\s$,
\[
s_n \ = \ \ell s_{n-1} + b s_{n-2} \ = \  
t \sigma \gamma  s_{n-1} + t^2 \sigma \beta s_{n-2}.
\]
If $n=2k+1$, then
\begin{equation*}\label{divides1}
\frac{s_{2k+1}}{t^{2k} \sigma^k} \ = \ 
\frac{t \sigma \gamma s_{2k}}{t^{2k} \sigma^k} +
\frac{t^2 \sigma \beta s_{2k-1}}{t^{2k} \sigma^k} 
\ = \ 
 \sigma \gamma \, \frac{ s_{2k}}{t^{2k-1} \sigma^k} +
 \beta  \,\frac{ s_{2k-1}}{t^{2k-2} \sigma^{k-1}}.
\end{equation*}
By induction, the two fractions on the right are integers, so
$s_{2k+1}$ is divisible by $t^{2k} \sigma^k$.

If $n=2k$, then
\begin{equation*}\label{divides2}
\frac{s_{2k}}{t^{2k-1} \sigma^k} \ =  \
\frac{t \sigma \gamma s_{2k-1}}{t^{2k-1} \sigma^k} +
\frac{t^2 \sigma \beta s_{2k-2}}{t^{2k-1} \sigma^k} 
\ = \
 \gamma  \,\frac{ s_{2k-1}}{t^{2k-2} \sigma^{k-1}} +
 \beta  \,\frac{ s_{2k-2}}{t^{2k-3} \sigma^{k-1}}.
\end{equation*}
By induction, the two fractions on the right are integers, so
$s_{2k}$ is divisible by $t^{2k-1} \sigma^k$.
\end{proof}

From Lemma \ref{sdivisor}, 
for $n \geq 1$, as long as $s_{n+1}$ and $s_n$ are not both 0,
\[
\gcd(s_{n+1},s_n) \ \ \ {\rm is \  divisible \  by} \ \ \ t^{n-1}\sigma^{\lfloor n/2 \rfloor}.
\]
We now show that in the special case 
$t=1$, we have equality.

\begin{lemma}
For $\s$ defined by \eqref{eq:sseqdef}, assume
that $\ell>0$ and $b \not = 0$ are integers satisfying $\ell^2+4b \geq  0.$
If $\gcd(\ell,b)= \gcd(\ell^2,b)=r$, then  for
$n \geq 1$,
\[
\gcd(s_{n+1},s_{n})= r^{\lfloor n/2 \rfloor}.
\]
\label{gcd_lemma_1}
\end{lemma}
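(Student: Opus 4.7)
My plan is to divide out the $r$-adic content of $s_n$ and reduce the lemma to a handful of simple coprimality facts about the rescaled terms. By Proposition~\ref{t_eq_1_note}, the hypothesis $\gcd(\ell,b)=\gcd(\ell^2,b)=r$ forces $t=1$ and $\sigma=r$, so I can write $\ell=r\gamma$ and $b=r\beta$ with $\gcd(\gamma,\beta)=\gcd(r,\beta)=1$. Lemma~\ref{sdivisor} guarantees that $r^{\lfloor n/2\rfloor}$ divides $s_n$, so I would introduce the integer sequence $a_n:=s_n/r^{\lfloor n/2\rfloor}$, with $a_0=0$ and $a_1=1$.

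Substituting into $s_n=\ell s_{n-1}+b s_{n-2}$ and comparing powers of $r$ in the two parities yields the bifurcated recurrence
\[
a_{2k}=\gamma\,a_{2k-1}+\beta\,a_{2k-2},\qquad a_{2k+1}=r\gamma\,a_{2k}+\beta\,a_{2k-1},
\]
in which only the odd-index step carries an extra factor of $r$. Rewriting the target gcd in terms of $a_n$ gives $\gcd(s_{2k+1},s_{2k})=r^k\gcd(a_{2k+1},a_{2k})$ and $\gcd(s_{2k},s_{2k-1})=r^{k-1}\gcd(r\,a_{2k},a_{2k-1})$, so the lemma reduces to three coprimality assertions: (i) $\gcd(a_n,a_{n-1})=1$ for all $n\geq 1$, (ii) $\gcd(a_n,\beta)=1$ for all $n\geq 1$, and (iii) $\gcd(a_{2k-1},r)=1$ for all $k\geq 1$.

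I would prove (ii), (iii), and (i) in that order by induction on $n$. Assertion (ii) follows by reducing each recurrence modulo $\beta$ and using $\gcd(\gamma,\beta)=\gcd(r,\beta)=1$; assertion (iii) follows by reducing the odd-index recurrence modulo $r$ and using $\gcd(\beta,r)=1$; assertion (i) follows because in either parity $\gcd(a_{n+1},a_n)=\gcd(\beta\,a_{n-1},a_n)$, which is $1$ by (ii) together with the previous step of (i). The base cases use only $a_1=1$, $a_2=\gamma$, and $\gcd(\gamma,\beta)=1$. Combining the three assertions with the displayed formulas yields $\gcd(s_{n+1},s_n)=r^{\lfloor n/2\rfloor}$.

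The main obstacle is the bookkeeping between the two parities: because only the odd-index recurrence carries the factor $r$, coprimality with $r$ is an odd-index-only assertion, whereas coprimality with $\beta$ must be maintained at every index. Matching each coprimality claim to the correct parity of recurrence and the correct modular reduction is the delicate part, but once it is laid out carefully, the conclusion is immediate.
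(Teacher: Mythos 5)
Your proposal is correct, and I checked the details: the bifurcated recurrence $a_{2k}=\gamma a_{2k-1}+\beta a_{2k-2}$, $a_{2k+1}=r\gamma a_{2k}+\beta a_{2k-1}$ for $a_n=s_n/r^{\lfloor n/2\rfloor}$ is right, the two gcd identities $\gcd(s_{2k+1},s_{2k})=r^k\gcd(a_{2k+1},a_{2k})$ and $\gcd(s_{2k},s_{2k-1})=r^{k-1}\gcd(r\,a_{2k},a_{2k-1})$ are right, and your three coprimality claims (together with their inductive proofs via reduction mod $\beta$ and mod $r$) use only the facts $\gcd(\gamma,\beta)=\gcd(r,\beta)=1$ supplied by Proposition~\ref{t_eq_1_note} and the integrality from Lemma~\ref{sdivisor}. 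However, your route is genuinely different from the paper's. The paper never normalizes the sequence: it inducts directly on the hypothesis $\gcd(s_{n+1},s_n)=p\,r^{\lfloor n/2\rfloor}$ and argues that $p=1$ by repeatedly invoking the recurrence for $\s$ itself, splitting on the parity of $n$ and, in the odd case, on whether a prime factor $p'$ of $p$ divides $r$; the coprimality of $p$ with $\beta$ and $\gamma$ (or with $\beta$ and $r$) is extracted anew inside each case. Your version front-loads the single division by $r^{\lfloor n/2\rfloor}$ and then isolates the arithmetic into three clean, separately provable statements about the rescaled sequence, which makes the parity bookkeeping explicit and avoids the subcase analysis on prime divisors. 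What the paper's approach buys is that it works directly with the objects already in play ($s_n$, $\gcd(s_{n+1},s_n)$) and its intermediate quantities feed directly into the later proof of Theorem~\ref{thm:case2}; what yours buys is modularity and a more transparent accounting of exactly where each coprimality hypothesis is used (in particular, it makes visible that $\gcd(\gamma,r)$ is never needed).
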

\begin{proof}
Since $\gcd(\ell,b)=r=\gcd(\ell^2,b)$, we have, from Proposition \ref{t_eq_1_note},
\[
t=1; \ \  \ \ \ell=r \gamma; \ \  \ \  b= r \beta;
\]
and
\[
\gcd(r,\beta)=1; \ \   \ \  \gcd(\beta,\gamma)=1.
\]
We use induction on $n$.  For the base case,
$$
\gcd(s_2,s_1) \  = \ 
\gcd(\ell,1) \  =  \ 1  \ = r^{\lfloor \frac 1 2 \rfloor}
$$
and
$$
\gcd(s_3,s_2) \  = \ 
\gcd(\ell^2+b, \ell) \  = \  \gcd(b, \ell) \ = \ r \  = \  r^{\lfloor 2/2 \rfloor}.$$
Let $n \geq 3$ and assume the lemma is true for smaller values.
Suppose 
$$\gcd(s_{n+1},s_{n})= p \, r^{\lfloor n/2 \rfloor}.$$
We show that $p=1$.

Assume first that $n=2k$, so that our hypothesis  is 
\[
\gcd(s_{2k+1},s_{2k}) = p \, r^k.
\]
  By the recursion for $\s$,
\[
s_{2k+1} \ = \ r \gamma s_{2k} + r \beta s_{2k-1}.
\]
Since $pr^k$ divides both $s_{2k+1}$ and $s_{2k}$,
it must also divide $r \beta s_{2k-1}$.
By induction, $\gcd(s_{2k},s_{2k-1})= r^{k-1}$, so
$\frac{ s_{2k} }{ r^{k-1} }$ and $\frac{ s_{2k-1} }{ r^{k-1} }$ are
relatively prime and therefore
$pr$ must divide $r \beta$, i.e., $p$ divides $\beta $.

But now, applying the recursion for $\s$ again, we obtain
\[
s_{2k} \ = \ r \gamma s_{2k-1} + r \beta s_{2k-2}.
\]
By Lemma \ref{sdivisor},
$s_{2k-2}$ is divisible by $\sigma^{k-1}= r^{k-1}$.
Thus, since $p$ divides $\beta$, we have that $pr^k$ divides $r \beta s_{2k-2}$. 
So $pr^k$ also divides $s_{2k}$, it must
divide $r \gamma s_{2k-1}$.
By induction,  $\gcd(s_{2k-1},s_{2k})= r^{k-1}$, and thus
$pr$ is relatively prime to $\frac{ s_{2k-1} }{ r^{k-1} }$.
So it must be that $pr$ divides  $r \gamma$, i.e., $p$ divides $\gamma$.
But now we have that $p$ divides both $\beta$ and $\gamma$, which are relatively
prime.  So, $p=1$.

In the case $n=2k+1$, our hypothesis is
\[
\gcd(s_{2k+2},s_{2k+1}) = p r^k.
\]
  By the recursion for $\s$, we have
\[
s_{2k+2} \ = \ r \gamma s_{2k+1} + r \beta s_{2k}.
\]
Since $pr^k$ divides both $s_{2k+2}$ and $s_{2k+1}$,
it must also divide $r \beta s_{2k}$.
By induction, $\gcd(s_{2k+1},s_{2k})= r^{k}$, so
$\frac{ s_{2k+1} }{ r^{k} }$ and $\frac{ s_{2k} }{ r^{k} }$ are
relatively prime and therefore
$p$ must divide $r \beta$. 
Let $p'$ be a prime factor of $p$.

Suppose first that $p'$ does not divide $r$.
Then $p'$ divides $\beta$. 
Applying the recursion for $\s$ again,
\begin{equation}
s_{2k+1} \ = \ r \gamma s_{2k} + r \beta s_{2k-1}.
\label{apply_rec_again}
\end{equation}
By Lemma \ref{sdivisor},
$s_{2k-1}$ is divisible by $\sigma^{k-1}= r^{k-1}$.
Thus, since $p'$ divides $\beta$, we have that $p'r^k$ divides $r \beta s_{2k-1}$.
Since $p'r^k$ also divide $s_{2k+1}$, it must 
 divide $r \gamma s_{2k}$.
By induction,  $\gcd(s_{2k+1},s_{2k})= r^{k}$, and thus
$p'$ is relatively prime to $\frac{ s_{2k} }{ r^{k} }$.
So it must be that $p'$ divides  $r \gamma$. But our assumption was that the prime
$p'$ does not divide $r$, so it must be that $p'$ divides $\gamma$.
But then we have $p'$ dividing both $\beta$ and $\gamma$, which are relatively
prime.  So, $p'=1$.

On the other hand, given that $p'$ divides $r \beta$,
if  $p'$ divides $r$, then,
since $s_{2k}$ is divisible by $r^k$,
$r \gamma s_{2k}$ is divisible by $p'r^k$.
Since $p'r^k$ also divides $s_{2k+1}$, 
by (\ref{apply_rec_again}), it also divides $r \beta s_{2k-1}$.
But since $\gcd(s_{2k},s_{2k-1}) = r^{k-1}$,
$p'r$ is relatively prime to
$s_{2k-1}/r^{k-1}$  and therefore $p'r$ divides  $r \beta$,
so $p'$ divides  $\beta$.  But $\gcd(r,\beta)=1$, so $p'=1$. Thus $p=1$.
\end{proof}

By Lemma \ref{sdivisor}, for all integers $\ell$ and $b$ in our range of interest ($\ell > 0$ and $\ell^2+4b \geq 0$),
 $ \gcd(s_{n+1},s_n)$ is divisible by $t^{n-1}\sigma^{\lfloor n/2 \rfloor}$.
However, in contrast to the case when $t=1$,  equality does not always hold.  
For example, if $\ell=6$ and $b=36$, then $r=6$,  $t=6$, and $\sigma=1$,
so $t^{n-1}\sigma^{\lfloor n/2 \rfloor}=6^{n-1}$ and we get the following:

\vspace{.2in}
{\tiny
\noindent
\begin{tabular}{|c||c|c|c|c|c|c|c|c|c|c|c|c|c|c|c|c|c|c|c|c|c|c|c|c|}
\hline
\ & \ & \ & \ & \ & \ & \ & \ & \ & \ & \ & \ & \ & \ & \ & \ & \ & \ & \ & \ & \ & \ & \ & \ &\ \\
$n$ & 1 & 2 & 3&4&5&6&7&8&9&10&11&12&13&14&15&16&17&18&19&20&21&22&23&24\\
\ & \ & \ & \ & \ & \ & \ & \ & \ & \ & \ & \ & \ & \ & \ & \ & \ & \ & \ & \ & \ & \ & \ & \  &\ \\
\hline
\ & \ & \ & \ & \ & \ & \ & \ & \ & \ & \ & \ & \ & \ & \ & \ & \ & \ & \ & \ & \ & \ & \ & \  &\ \\
$\frac{\gcd(s_{n+1},s_n)}{6^{n-1}}$&
1& 1& 2& 3& 1& 2& 1& 3& 2& 1& 1& 6& 1& 1& 2& 3& 1& 2& 1& 3& 2& 1& 1& 6\\
\ & \ & \ & \ & \ & \ & \ & \ & \ & \ & \ & \ & \ & \ & \ & \ & \ & \ & \ & \ & \ & \ & \ & \  &\ \\
\hline
\end{tabular}
}

\vspace{.2in}
As another  example, if $\ell=6 \cdot 3 \cdot 5$ and $b=-36 \cdot 3 \cdot 7$, then $r=6 \cdot 3$,  $t=6$, and $\sigma=3$,
so $t^{n-1}\sigma^{\lfloor n/2 \rfloor}=6^{n-1}3^{\lfloor n/2 \rfloor}$ and we get the following:

\vspace{.2in}
{\tiny
\noindent
\begin{tabular}{|c||c|c|c|c|c|c|c|c|c|c|c|c|c|c|c|c|c|c|c|c|c|c|c|c|}
\hline
\ & \ & \ & \ & \ & \ & \ & \ & \ & \ & \ & \ & \ & \ & \ & \ & \ & \ & \ & \ & \ & \ & \ & \  &\ \\
$n$ & 1 & 2 & 3&4&5&6&7&8&9&10&11&12&13&14&15&16&17&18&19&20&21&22&23&24\\
\ & \ & \ & \ & \ & \ & \ & \ & \ & \ & \ & \ & \ & \ & \ & \ & \ & \ & \ & \ & \ & \ & \ & \  &\ \\
\hline
\ & \ & \ & \ & \ & \ & \ & \ & \ & \ & \ & \ & \ & \ & \ & \ & \ & \ & \ & \ & \ & \ & \ & \  &\ \\
$\frac{\gcd(s_{n+1},s_n)}{6^{n-1}3^{\lfloor n/2 \rfloor}}$&
1&1&2&1&1&6&1&1&2&1&1&6&1&1&2&1&1&6&1&1&2&1&1&6\\
\ & \ & \ & \ & \ & \ & \ & \ & \ & \ & \ & \ & \ & \ & \ & \ & \ & \ & \ & \ & \ & \ & \ & \  &\ \\
\hline
\end{tabular}
}
\vspace{.2in}

What we {\em will}
be able to show is that
$\frac{ \gcd(s_{n+1},s_n) }{ t^{n-1}\sigma^{\lfloor n/2 \rfloor} }$
is a
factor of $t$, something that might be conjectured from the evidence in the
tables.
In order to prove this, we need another fact.
\begin{lemma}
For $\s$ defined by \eqref{eq:sseqdef}, assume that $\ell > 0$ and $b \not = 0$ are integers satisfying
$\ell^2+4b\geq 0$.
Then for $n \geq 0$,
\[
s_n = t^{n-1} f_n,
\]
where $f$ is defined by the recurrence
\[
f_n \ = \  \tfrac{\ell}{t} \, f_{n-1} + \tfrac{b}{t^2} \, f_{n-2}, 
\]
with initial conditions $f_0=0$, $f_1=1$.  
Furthermore,
$f$, so defined, is an integer sequence satisfying
\[\gcd(f_{n+1},f_n) \ = \ \sigma^{\lfloor n/2 \rfloor}.
\]
\label{frec}
\end{lemma}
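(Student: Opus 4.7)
The plan is to reduce everything to Lemma~\ref{gcd_lemma_1} by stripping off the appropriate power of $t$ from $\s$ to obtain a recurrence whose coefficients fall squarely within the hypotheses of that lemma. The setup in Definition~\ref{case2defs} and Proposition~\ref{ell_b_facts} already tells us that $\ell/t = \sigma\gamma$ and $b/t^{2} = \sigma\beta$ are integers, so the defining recurrence for $f$ is an integer recurrence with integer initial data, and $f$ is automatically an integer sequence.

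First I would prove the factorization $s_n = t^{n-1} f_n$ by strong induction on $n$. The base cases $s_{0}=0$ (interpreted as $f_{0}=0$) and $s_{1}=1=t^{0}f_{1}$ are immediate. For the inductive step, substitute $s_{n-1}=t^{n-2}f_{n-1}$ and $s_{n-2}=t^{n-3}f_{n-2}$ into $s_{n}=\ell s_{n-1}+b s_{n-2}$ and factor $t^{n-1}$ out:
\[
s_{n} \;=\; \ell\, t^{n-2} f_{n-1} + b\, t^{n-3} f_{n-2}
\;=\; t^{n-1}\Bigl( \tfrac{\ell}{t}\, f_{n-1} + \tfrac{b}{t^{2}}\, f_{n-2}\Bigr)
\;=\; t^{n-1} f_{n}.
\]

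Next I would prove the gcd statement by invoking Lemma~\ref{gcd_lemma_1} applied to the sequence $f$ itself, viewed as defined by a recurrence with $\ell' := \ell/t = \sigma\gamma$ and $b' := b/t^{2} = \sigma\beta$ in place of $\ell$ and $b$. The hypotheses of that lemma transfer cleanly: $\ell' > 0$ since $\ell, t > 0$; $b'\neq 0$ since $b\neq 0$; and $\ell'^{2}+4b' = (\ell^{2}+4b)/t^{2}\geq 0$. The crucial point is to compute the two gcds that appear in the hypothesis of Lemma~\ref{gcd_lemma_1} and check they coincide. Using $\gcd(\gamma,\beta)=1$ and $\gcd(\sigma,\beta)=1$ from Proposition~\ref{ell_b_facts},
\[
\gcd(\ell',b') \;=\; \gcd(\sigma\gamma,\sigma\beta) \;=\; \sigma,
\qquad
\gcd(\ell'^{2},b') \;=\; \gcd(\sigma^{2}\gamma^{2},\sigma\beta) \;=\; \sigma\,\gcd(\sigma\gamma^{2},\beta) \;=\; \sigma,
\]
since $\gcd(\sigma,\beta)=\gcd(\gamma,\beta)=1$ forces $\gcd(\sigma\gamma^{2},\beta)=1$. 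Thus Lemma~\ref{gcd_lemma_1}, applied with common value $r=\sigma$, yields $\gcd(f_{n+1},f_{n}) = \sigma^{\lfloor n/2\rfloor}$ as desired.

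The proof is essentially routine bookkeeping; the only step requiring any care is the verification of the two gcd identities for $\ell'$ and $b'$, which hinges on invoking all three of the coprimality statements $\gcd(\gamma,\beta)=\gcd(\gamma,t)=\gcd(\sigma,\beta)=1$ from Proposition~\ref{ell_b_facts}. Once those are in hand, the reduction to Lemma~\ref{gcd_lemma_1} is mechanical, and no further analysis of the auxiliary constants $r$, $t$, $\sigma$, $\gamma$, $\beta$ is needed.
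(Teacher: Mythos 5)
Your proposal is correct and follows essentially the same route as the paper: induction for the factorization $s_n = t^{n-1}f_n$, the identities $\ell/t=\sigma\gamma$, $b/t^2=\sigma\beta$ from Proposition~\ref{ell_b_facts} to see that $f$ is an integer sequence, and the computation $\gcd(\ell/t,\,b/t^2)=\gcd((\ell/t)^2,\,b/t^2)=\sigma$ to reduce the gcd claim to Lemma~\ref{gcd_lemma_1}. (One trivial remark: the verification only uses $\gcd(\gamma,\beta)=1$ and $\gcd(\sigma,\beta)=1$, not $\gcd(\gamma,t)=1$.)
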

\begin{proof}
First, to show that $s_n=t^{n-1}f_n$, we use induction.
 For the base case, $s_0=0$, $s_1=1$, so the claim is true.
Let $n \geq 1$ and assume it is true for values less than or equal to $n$.
Then
\begin{align*}
s_{n+1}
 \ = \ 
\ell s_{n} + b s_{n-1}
 \ = \ 
\ell t^{n-1} f_{n} + b t^{n-2} f_{n-1}
 \ = \ 
t^{n}\left ( \tfrac{\ell}{t} \, f_n + \tfrac{b}{t^2} \,  f_{n-1} \right )
 \ = \ 
t^{n} f_{n+1}.
\end{align*}
To see that $f$ is an integer sequence,  from
Proposition~\ref{ell_b_facts},  $\frac{ \ell }{ t } = \sigma \gamma$ and $\frac{ b }{ t^2 } = \sigma \beta$,
and $\sigma$, $\gamma$, and $\beta$ are   integers.  In particular, $\ell/t$ is a positive integer.
Finally, to prove the assertion about $\gcd(f_{n+1},f_n)$,
observe that
\[
\gcd \left( \tfrac \ell t , \tfrac{ b }{ t^2 } \right) \ = \ \gcd(\sigma \gamma, \sigma \beta) \ = \ \sigma,
\]
since $ \gamma$ and $\beta$ are relatively prime.
Furthermore,
\[
\gcd \left( \left( \tfrac \ell t \right)^2 , \tfrac{ b }{ t^2 } \right) \ = \ \gcd \left( \sigma^2 \gamma^2, \sigma \beta \right) \ = \ \sigma,
\]
since $\beta$ is  relatively prime to both $\gamma$ and $\sigma$. 

Note that the discriminant of the characteristic polynomial of the recurrence for  $f$ is $(\ell/t)^2 + 4b/t^2 = (\ell^2+4b)/t^2$, which is nonnegative since  $\ell^2+4b \geq 0$.
Additionally, $\gcd \left( \frac \ell t , \frac{ b }{ t^2 } \right) = \gcd \left( \left( \frac \ell t \right)^2 , \frac{ b }{ t^2 } \right)$, so
 the recurrence $f$ satisfies the hypothesis of Lemma \ref{gcd_lemma_1}
and therefore the final claim follows.
\end{proof}

We can now show that
\[
t^{n-1}\sigma^{\lfloor n/2 \rfloor} \ \ \Big |  \ \ 
 \gcd(s_{n+1},s_n)  \ \ \Big | \ \  
t^{n}\sigma^{\lfloor n/2 \rfloor}.
\]

\begin{lemma}
For $\s$ defined by \eqref{eq:sseqdef}, assume
that $\ell> 0$ and $b \not = 0$ are integers satisfying  $\ell^2+4b \geq 0$.
For $n \geq 1$, 
$ \gcd(s_{n+1},s_n)$ divides $t^{n}\sigma^{\lfloor n/2 \rfloor}$.
\label{gcd_lemma_2}
\end{lemma}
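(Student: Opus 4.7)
The plan is to reduce the problem to Lemma~\ref{frec} together with an elementary manipulation of $\gcd$. The key observation is that $s_n$ carries an obvious factor $t^{n-1}$, so the hard part has already been done: the residual sequence $f_n=s_n/t^{n-1}$ satisfies the hypothesis of Lemma~\ref{gcd_lemma_1}, and thus $\gcd(f_{n+1},f_n)=\sigma^{\lfloor n/2\rfloor}$ exactly. What remains is to pull the $t$-powers back through the $\gcd$ of consecutive $s$-terms, which only costs a single factor of $t$.

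Concretely, I would proceed as follows. First, apply Lemma~\ref{frec} to write
\[
s_n = t^{n-1}f_n \qquad\text{and}\qquad s_{n+1}=t^{n}f_{n+1},
\]
with $\gcd(f_{n+1},f_n)=\sigma^{\lfloor n/2\rfloor}$. Next, factor out the common power of $t$:
\[
\gcd(s_{n+1},s_n) \ = \ \gcd\bigl(t^{n}f_{n+1},\,t^{n-1}f_n\bigr) \ = \ t^{n-1}\,\gcd(tf_{n+1},\,f_n).
\]
Then bound the remaining $\gcd$ by enlarging the second argument:
\[
\gcd(tf_{n+1},f_n) \ \Big|\ \gcd(tf_{n+1},tf_n) \ = \ t\gcd(f_{n+1},f_n) \ = \ t\,\sigma^{\lfloor n/2\rfloor}.
\]
Multiplying by $t^{n-1}$ gives the desired divisibility $\gcd(s_{n+1},s_n)\mid t^n\sigma^{\lfloor n/2\rfloor}$.

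I don't expect any genuine obstacle: Lemma~\ref{frec} does all the substantive arithmetic work (in particular, it is there that one needs $\ell^2+4b\geq 0$ and the relative primalities among $\gamma$, $\beta$, $\sigma$). The proof of Lemma~\ref{gcd_lemma_2} is essentially a bookkeeping step, and the only subtlety is to notice that one must not simply apply $\gcd(ab,c)\mid a\gcd(b,c)$ (which is false in general) but instead use the safe identity $\gcd(tx,y)\mid\gcd(tx,ty)=t\gcd(x,y)$. Together with Lemma~\ref{sdivisor}, this pins down $\gcd(s_{n+1},s_n)/(t^{n-1}\sigma^{\lfloor n/2\rfloor})$ as a divisor of $t$, matching the numerical evidence tabulated just before the lemma.
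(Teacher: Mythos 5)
Your proof is correct and follows essentially the same route as the paper's: both invoke Lemma~\ref{frec} to write $s_n=t^{n-1}f_n$ with $\gcd(f_{n+1},f_n)=\sigma^{\lfloor n/2\rfloor}$, factor out $t^{n-1}$, and then account for one extra factor of $t$ (the paper pins down $\gcd(tf_{n+1},f_n)$ exactly as $\sigma^{\lfloor n/2\rfloor}\gcd\bigl(t,f_n/\sigma^{\lfloor n/2\rfloor}\bigr)$ using coprimality, while you settle for the divisibility bound $\gcd(tf_{n+1},f_n)\mid t\,\sigma^{\lfloor n/2\rfloor}$, which suffices). One small quibble with your closing remark: the identity $\gcd(ab,c)\mid a\gcd(b,c)$ is in fact true in general (check it prime by prime), and your ``safe'' chain $\gcd(tx,y)\mid\gcd(tx,ty)=t\gcd(x,y)$ is precisely a proof of it.
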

\begin{proof}
Using Lemma \ref{frec},
\begin{align*}
\gcd(s_{n+1},s_n) \ &= \ \gcd(t^n f_{n+1},t^{n-1} f_n)
 \ = \ 
t^{n-1} \gcd(t f_{n+1},  f_n) \\ 
 \ &= \ t^{n-1} \sigma^{\lfloor n/2 \rfloor} \gcd \left (t \, \frac{f_{n+1}}{\sigma^{\lfloor n/2 \rfloor}},\frac{f_{n}}{\sigma^{\lfloor n/2 \rfloor}} \right ).
\end{align*}
By Lemma \ref{frec},  the fractions in the last line are relatively prime integers.
Thus,
\[
\gcd(s_{n+1},s_n) \ = \   
 t^{n-1} \sigma^{\lfloor n/2 \rfloor} \gcd \left (t,\frac{f_{n}}{\sigma^{\lfloor n/2 \rfloor}} \right ),
\]
which is a divisor of 
$ t^{n-1} \sigma^{\lfloor n/2 \rfloor}t= t^{n} \sigma^{\lfloor n/2 \rfloor}$.
\end{proof}

We need one more lemma to prove Theorem \ref{thm:lseqgorconj}, implying that the sequence
$\frac{ s_n }{ \gcd(s_n,s_{n+1}) }$ 
  grows without bound.

\begin{lemma}
For $\s$ defined by \eqref{eq:sseqdef}, assume
that $\ell> 0$ and $b \not = 0$ are integers satisfying  $\ell^2+4b \geq 0$.
Define the sequence $\{h_n\}_{n \geq 1}$ by
\[
h_n = \frac{s_n}{t^{n-1}\sigma^{\lfloor n/2 \rfloor}}.
\]
Then for any $B > 0$, there is a positive integer $n_0$ such that $h_n > B$ for all $n \geq n_0$.

\label{lemma:growth}
\end{lemma}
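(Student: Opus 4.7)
The plan is to reduce the question to asymptotics of the sequence $f_n$ from Lemma~\ref{frec}. Since $s_n = t^{n-1}f_n$ and $\sigma^{\lfloor n/2\rfloor} \mid f_n$, both from Lemma~\ref{frec}, we have
$$h_n = \frac{s_n}{t^{n-1}\sigma^{\lfloor n/2\rfloor}} = \frac{f_n}{\sigma^{\lfloor n/2\rfloor}},$$
so it suffices to show $f_n/\sigma^{\lfloor n/2\rfloor}\to\infty$. The sequence $f$ satisfies $f_n = \sigma\gamma f_{n-1} + \sigma\beta f_{n-2}$ with $f_0 = 0$, $f_1 = 1$, and its characteristic roots are $u' = u/t$, $v' = v/t$, both real because $\ell^2+4b\geq 0$.

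The central estimate I aim to prove is $(u')^2 \geq \sigma$, with strict inequality except in an explicit boundary case. Using $(u')^2 = \sigma\gamma u' + \sigma\beta$, this is equivalent to $\gamma u' + \beta \geq 1$. For $\beta \geq 1$ it is immediate and strict, since $\gamma u' > 0$. For $\beta \leq -1$ I would combine the lower bound $u' \geq \sigma\gamma/2$ with the discriminant constraint $\sigma\gamma^2 \geq 4|\beta|$ (forced by $\ell^2+4b\geq 0$ via Proposition~\ref{ell_b_facts}) to get $\gamma u' \geq 2|\beta| \geq 1+|\beta|$; equality throughout forces both $\sigma\gamma^2 = 4|\beta|$ (that is, $\ell^2+4b = 0$) and $|\beta| = 1$.

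Next I would split on whether the characteristic roots coincide. If $\ell^2+4b > 0$, then $(u')^2 > \sigma$ strictly, and the Binet formula $f_n = ((u')^n-(v')^n)/(u'-v')$ applies; a sign argument using $u'v' = -\sigma\beta$ together with $u'+v' = \sigma\gamma > 0$ shows $|v'| < u'$, so $f_n/\sigma^{\lfloor n/2\rfloor}$ is asymptotic to a positive multiple of $(u'/\sqrt\sigma)^n$ and diverges. If $\ell^2+4b = 0$, then $u' = v' = \sigma\gamma/2$, $(u')^2 = \sigma|\beta|$, and $f_n = n(u')^{n-1}$. Plugging this into $h_n$ and simplifying with $(u')^2 = \sigma|\beta|$ yields the closed forms
$$h_{2k} = \frac{2k\,|\beta|^k}{u'} \qquad \text{and} \qquad h_{2k+1} = (2k+1)\,|\beta|^k,$$
both of which tend to $\infty$ for any $|\beta| \geq 1$, so the boundary case $|\beta|=1$ where $(u')^2 = \sigma$ is also covered.

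The main obstacle will be precisely this boundary case $(u')^2 = \sigma$, where the exponential asymptotics of the distinct-root argument degenerate; the explicit repeated-root closed form delivers the linear growth needed and completes the argument. Everything else is routine manipulation of second-order linear recurrences combined with the divisibility from Lemma~\ref{frec} and the arithmetic identities of Proposition~\ref{ell_b_facts}.
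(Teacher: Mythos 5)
Your proposal is correct and follows essentially the same route as the paper: the core inequality $(u')^2 > \sigma$ is exactly the paper's condition $u/(t\sqrt{\sigma}) > 1$, proved by the same split on the sign of $b$ using the discriminant constraint, followed by Binet asymptotics for distinct roots and the explicit formula $s_n = n u^{n-1}$ for the repeated root. The only cosmetic differences are that you normalize through $f_n$ from Lemma~\ref{frec} rather than working with $s_n$ directly, and your identity $(u')^2=\sigma|\beta|$ handles the $D=0$ case uniformly where the paper splits on the parity of $\ell/2$.
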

\begin{proof}
Assume first that $D=\ell^2+4b >0$. Then
\[
h_n = \frac{s_n}{t^{n-1}\sigma^{\lfloor n/2 \rfloor}} \geq
 \frac{s_n}{(t\sqrt{\sigma})^{n}} = \frac{1}{\sqrt{D}}\left (
\left ( \frac{u}{t \sqrt{\sigma}}\right )^n -
\left ( \frac{v}{t \sqrt{\sigma}}\right )^n 
\right ),
\]
where $u$ and $v$ are the roots of the characteristic equation and $u > 1$ and $|v|<|u|$.
Thus, we need only verify that $u/(t\sqrt{\sigma}) > 1$.  If $b > 0$, then, recalling Definition \ref{case2defs},
\[
\frac{u}{t \sqrt{\sigma}}\  =\  \frac{\ell + \sqrt{\ell^2+4b}}{2t \sqrt{\sigma}} \ >\ 
\frac{\ell}{t \sqrt{\sigma}}\  =\  \frac{\sigma t \gamma}{t \sqrt{\sigma}}\  = \ \gamma \sqrt{\sigma} \ \geq \  1,
\]
since both $\gamma$ and $\sigma$ are positive integers.
On the other hand, if $b <0$, then, from Definition \ref{case2defs},
$\ell = \sigma t \gamma$ and $b= \sigma t^2 \beta$, so
\[
\frac{u}{t \sqrt{\sigma}}\  =\ \frac{\sigma t \gamma + \sqrt{\sigma^2 t^2 \gamma^2 + 4 \sigma t^2 \beta}}{2 t \sqrt{\sigma}}
\ = \ \frac{\gamma \sqrt{\sigma} + \sqrt{\gamma^2 \sigma + 4 \beta}}{2} \ > \
\frac{\gamma \sqrt{\sigma}}{2}.
\]
But note that since $\ell^2 + 4b > 0$ and $b < 0$, it must be that
$\gamma^2 \sigma + 4 \beta > 0$ and $\beta < 0$.
This means that $\gamma^2 \sigma  > 4$ and therefore
$\gamma \sqrt{\sigma}/2 > 1$. 

For the case when $D=0$, note that $\ell$ must be even and $b=-\ell^2/4$ and
\[
s_n = n \left ( \frac{\ell}{2}\right ) ^{n-1}.
\]  
So, if $q=\ell/2$ is odd, then $r=\gcd(\ell,b) = \gcd(2q, -q^2) = q$
and $t=\gcd(\ell^2/r,b/r) = \gcd(4q, -q) = q.$
Thus $\sigma = 1$ and
\[ h_n = 
\frac{s_n}{t^{n-1} \sigma^{\lfloor n/2 \rfloor}} \ = \
\frac{n q^{n-1}}{q^{n-1}} \ = \ n.
\]
But if $D=0$ and $\ell/2=q$ is even,  then
$r=\gcd(\ell,b)=\gcd(2q, -q^2) = 2q$ and $t=\gcd(\ell^2/r,b/r)=\gcd(2q,q/2) = q/2$.
So $\sigma = r/t = 4$.  Then
\[ h_n = 
\frac{s_n}{t^{n-1} \sigma^{\lfloor n/2 \rfloor}} \ = \
\frac{nq^{n-1}}{(q/2)^{n-1} 4^{\lfloor n/2 \rfloor}} \ = \
\left \{
\begin{array}{ll}
n/2 & {\mbox {if $n$ is even,}}\\
n & {\mbox {if $n$ is odd.}}
\end{array}
\right .
\]
In either case, $h_n$ grows without bound.
\end{proof}

\subsection{Failure of the Gorenstein condition for $b \not =-1$}

We can now  complete the proof of Theorem \ref{thm:lseqgorconj}. 

\begin{theorem}
\label{thm:case2}
For integers $\ell > 0$ and $b \not = 0$ satisfying $\ell^2+4b\geq 0$, let
$\s$ be defined by
\begin{equation*}
  s_0= 0, \ s_1=1, 
  \qquad \text{ and } \qquad
  s_j=\ell s_{j-1}+b s_{j-2} \ \text{ for } \ j\geq 2 \, .
\end{equation*}
Choose $n_0$ large enough so that for all $n \geq n_0$,
\[
\frac{s_n}{t^{n-2}\sigma^{\lfloor (n-1)/2 \rfloor}} > t(r+|b|). 
\]
Then unless $b=-1$,
the $n$-dimensional $\s$-lecture hall cone $\CC_n^{ (\s) }$ is not Gorenstein 
for $n > n_0$. 
\end{theorem}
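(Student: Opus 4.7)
The plan is to argue by contradiction: assume $\CC_n^{(\s)}$ is Gorenstein for some sufficiently large $n$, and derive $b=-1$. By Corollary~\ref{cor:csrec} together with Corollary~\ref{cor:onenenough}, the sequence $\{c_j\}$ defined by $c_1=1$ and $c_j\,s_{j-1}=c_{j-1}\,s_j+d_j$, where $d_j:=\gcd(s_j,s_{j-1})$, consists of integers for $1\le j\le n$. Substituting the recurrence $s_j=\ell s_{j-1}+b\,s_{j-2}$ into the level-$n$ relation and using the level-$(n-1)$ relation to eliminate the term $c_{n-1}\,s_{n-2}$, I would obtain the identity
\[
(c_n-\ell c_{n-1}-b\,c_{n-2})\,s_{n-1}\;=\;d_n+b\,d_{n-1}.
\]
Since the left side is an integer multiple of $s_{n-1}$, this forces $s_{n-1}\mid d_n+b\,d_{n-1}$.

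Next I would bound $|d_n+b\,d_{n-1}|$ using the size estimates of Section~3.1. Lemma~\ref{gcd_lemma_2} gives $d_n\le t^{n-1}\sigma^{\lfloor(n-1)/2\rfloor}$ and $d_{n-1}\le t^{n-2}\sigma^{\lfloor(n-2)/2\rfloor}$. Splitting by the parity of~$n$ (the exponent jump $\lfloor(n-1)/2\rfloor-\lfloor(n-2)/2\rfloor$ is $1$ or $0$) and using $t\le r$, these combine to
\[
|d_n+b\,d_{n-1}|\;\le\;t^{n-2}\sigma^{\lfloor(n-2)/2\rfloor}(r+|b|).
\]
The theorem's hypothesis, applied at index $n-1\ge n_0$, yields $s_{n-1}>(r+|b|)\,t^{n-2}\sigma^{\lfloor(n-2)/2\rfloor}$. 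Combined with the divisibility above, this forces $d_n+b\,d_{n-1}=0$. Applying the same argument at level $n-1$ (valid once $n-2\ge n_0$, which is why one may have to replace $n_0$ by $n_0+1$ at the end) then also gives $d_{n-1}+b\,d_{n-2}=0$.

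The final step extracts the contradiction. By Lemma~\ref{sdivisor} and Lemma~\ref{gcd_lemma_2}, I may write $d_j=t^{j-2}\sigma^{\lfloor(j-1)/2\rfloor}\,\delta_j$ with $\delta_j$ a positive divisor of~$t$. Plugging this form into $d_j=-b\,d_{j-1}$ and using $b=\sigma t^2\beta$ from Proposition~\ref{ell_b_facts}, the equation simplifies to
\[
\delta_j\;=\;\sigma^{\,1-e_j}\,t\,|\beta|\,\delta_{j-1},\qquad e_j:=\lfloor(j-1)/2\rfloor-\lfloor(j-2)/2\rfloor\in\{0,1\}.
\]
Since $\delta_j\le t$, this can only hold with $|\beta|=\delta_{j-1}=1$ and $\delta_j=t$, and additionally with $\sigma=1$ when $j$ is even. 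Because $n$ and $n-1$ have opposite parities, one of them is even, so combining the two instances forces $\sigma=1$ and forces $\delta_{n-1}$ to equal both $1$ (as $\delta_{j-1}$ at level $j=n$) and $t$ (as $\delta_j$ at level $j=n-1$); hence $t=1$. Then $r=\sigma t=1$ and $b=\sigma t^2\beta=\beta=-1$, contradicting $b\ne-1$.

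The main obstacle is the last structural step: the parity split is essential, because a single instance $d_j=-b\,d_{j-1}$ yields only $\beta=-1$ (and $\sigma=1$ when $j$ is even), never the full conclusion. Only combining two consecutive instances forces the collapse to $b=-1$, and this is exactly why Corollary~\ref{cor:onenenough} must be used to bootstrap Gorenstein-ness at level $n$ into the same property at levels $n-1$ and $n-2$.
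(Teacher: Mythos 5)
Your proof is correct and follows the same overall strategy as the paper's: establish that $s_{n-1}$ divides $d_n+b\,d_{n-1}$, use the growth hypothesis defining $n_0$ together with Lemma~\ref{gcd_lemma_2} to force $d_n+b\,d_{n-1}=0$, and then combine this vanishing at two consecutive indices (one of each parity) to pin down $\sigma=t=1$ and $\beta=-1$, hence $b=-1$. The one place you genuinely diverge is the derivation of the divisibility relation: the paper obtains the equivalent statement $\tfrac{s_n}{g_n}\mid\tfrac{g_{n+1}}{g_n}+b$ by first proving $\gcd\bigl(c_n,\tfrac{s_n}{g_n}\bigr)=1$ and then manipulating the two Gorenstein-point relations multiplicatively, whereas you eliminate $c_{n-1}s_{n-2}$ directly via the recurrence to get the integer identity $(c_n-\ell c_{n-1}-b\,c_{n-2})\,s_{n-1}=d_n+b\,d_{n-1}$, which is shorter and avoids the coprimality detour. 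Your acknowledged off-by-one in $n_0$ (you need the hypothesis at indices $n-1$ and $n-2$, so failure is only established for $n\geq n_0+2$) is harmless: the paper's own argument, which establishes non-Gorensteinness at dimension $n+2$ for odd $n\geq n_0$ and then invokes Corollary~\ref{cor:onenenough}, carries the same slack relative to the stated ``for $n>n_0$.''
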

\begin{proof}
Note that by Lemma \ref{lemma:growth}, such choice of $n_0$ is always possible since
\[
\frac{ s_{n} }{t^{n-2} \sigma^{\lfloor (n-1)/2 \rfloor }} \  \geq \ 
\frac{ s_{n} }{t^{n-1} \sigma^{\lfloor n/2 \rfloor }} \  = \ h_n. 
\]

By Lemma \ref{gcd_lemma_2}, for each $n \geq 2$ there is a positive integer $u_n$ 
such that $u_n | t$ 
and
\begin{align}
\gcd(s_{n},s_{n-1}) \ = & \ u_n  t^{n-2}\sigma^{\lfloor n/2 \rfloor}; \label{ugcd}
\end{align}
Let $g_n = \gcd(s_{n},s_{n-1})$.

If $\CC^{(\s)}_{n}$ is Gorenstein,
then by Corollary \ref{cor:csrec}
 the Gorenstein point $\c$ satisfies
\begin{align}
c_{n}s_{n-1} \  =  \ & c_{n-1}s_{n} + g_n  \label{firstc}
\end{align}
and all coodinates of $\c$ are integers.
Rewriting (\ref{firstc}) gives
\[
c_{n} \, \frac{s_{n-1}}{g_n} \  - \ 
c_{n-1} \,\frac{s_{n}}{g_n} \  = \  1 \, .
\]
Note that by definition of $g_n$,  the fractions are integers
and therefore the integers $c_{n}$ and $\frac{s_{n}}{g_n}$
are relatively prime.
Under the assumption that $\CC^{(\s)}_{n+1}$ is also Gorenstein, we can combine
(\ref{firstc}) for consecutive $g_n, g_{n+1}$ to get
\[
g_{n+1}(c_{n}s_{n-1} - c_{n-1}s_{n}) \  = \ g_{n+1}g_n \  = \  g_n(c_{n+1}s_{n} -c_{n}s_{n+1}),
\]
so
\[
c_{n}(g_{n+1}s_{n-1}+g_ns_{n+1}) \  = \  s_{n}(g_nc_{n+1}+g_{n+1}c_{n-1}).
\]
Then, dividing through by $g_n$,
\[
c_{n} \left (g_{n+1} \frac{s_{n-1}}{g_n} + s_{n+1} \right ) \  = \  
\frac{s_{n}}{g_n}  \left ( g_n c_{n+1}+g_{n+1}c_{n-1}
\right ).
\]
Again, by definition of $g_n$,  all the fractions here are integers.
We now use the fact that 
$c_{n}$ and $\frac{s_{n}}{g_n}$
are relatively prime to conclude that
\[
\frac{s_{n}}{g_n} \ \ \  {\rm divides} \  \ \ 
\frac{g_{n+1} s_{n-1}}{g_n} + s_{n+1} \ =  \ 
\frac{g_{n+1} s_{n-1}}{g_n} + \ell s_{n} + bs_{n-1} \ = \  
s_{n-1} \left ( \frac{g_{n+1}}{g_n} + b \right ) + \ell s_n,
\]
where in the last steps we have applied the recursion for $\s$.
But now this means that 
\begin{equation*}
\frac{s_{n}}{g_n}  \ \ \  {\rm divides} \  \ \ s_{n-1} \left ( \frac{g_{n+1}}{g_n} + b \right ) .
\end{equation*}
Since 
\[
\gcd(s_{n},s_{n-1}) \ = g_n,
\]
the integers $\frac{ s_{n} }{g_n} $ and $s_{n-1}$ have no common factors and therefore
\begin{equation}
\frac{ s_{n} }{g_n} \ \ \ {\rm  divides} \ \ \ \  \frac{g_{n+1}}{g_n} + b .
\label{divides_condition}
\end{equation}
Recalling that $g_n = u_n t^{n-2} \sigma^{\lfloor (n-1)/2 \rfloor }$, we have that
\[
\frac{ s_{n} }{t^{n-2} \sigma^{\lfloor (n-1)/2 \rfloor }} \ \ \ {\rm  divides} \ \ \ \  \frac{u_{n+1}t^{n-1} \sigma^{\lfloor n/2 \rfloor }}{t^{n-2} \sigma^{\lfloor (n-1)/2 \rfloor }} + u_nb \ = \ 
u_{n+1}t \sigma^{\lfloor n/2 \rfloor - \lfloor (n-1)/2 \rfloor} + u_n b.
\]
But notice that for fixed $b$ and $\ell$, the right-hand side is bounded:
each $u_i$ is a positive divisor of $t$, so
\[
u_{n+1}t \sigma^{\lfloor n/2 \rfloor - \lfloor (n-1)/2 \rfloor} + u_n b
\leq t^2 \sigma + t |b| = tr+t|b| = t(r+|b|).
\]
On the other hand, the left-hand side satisfies 
\[
\frac{ s_{n} }{t^{n-2} \sigma^{\lfloor (n-1)/2 \rfloor }} \  >
t(r+|b|).
\]
Thus, for $n \geq n_0$, the only way that
\[\frac{ s_{n} }{t^{n-2} \sigma^{\lfloor (n-1)/2 \rfloor }} \ \ \ {\rm  divides}
\ \ \ \ 
u_{n+1}t \sigma^{\lfloor n/2 \rfloor - \lfloor (n-1)/2 \rfloor} + u_n b
\]
is if the latter is 0.

In other words, for $n \geq  n_0$, if 
$\CC^{(\s)}_{n+1}$ is Gorenstein, then
\begin{equation}
b = -\frac{u_{n+1}t \sigma^{\lfloor n/2 \rfloor - \lfloor (n-1)/2 \rfloor}}{u_n}.
\label{bcondition}
\end{equation}
Let $n \geq n_0$ be an odd integer  and assume 
$\CC^{(\s)}_{n+2}$ is Gorenstein. By Corollary \ref{cor:onenenough},
$\CC^{(\s)}_{n+1}$ is also Gorenstein. 
Then $b$ must satisfy \eqref{bcondition} for both $n$ and $n+1$, i.e.,
\begin{equation}
b = \frac{-u_{n+1}t}{u_n}
\label{firstb}
\end{equation}
and
\begin{equation}
b = \frac{-u_{n+2}t \sigma}{u_{n+1}}.
\label{secondb}
\end{equation}
Recall that every $u_i$ is a positive divisor of $t$ and that $b= rt\beta$.
So, from \eqref{firstb}, $-u_{n+1}/u_n = r \beta$  and therefore $u_n$ divides $u_{n+1}$.  But also, $r= t \sigma$, so $t \  |  \ (u_{n+1}/u_n)$.  But since 
$(u_{n+1}/ u_n) \  | \    t$,   we must have
$u_{n+1}/u_n = t$ and therefore $u_{n+1}=t$ and $u_n=1$.  This means that $b=-t^2$.

But \eqref{secondb}  must also hold, so
$b= -u_{n+2}t \sigma/u_{n+1}=-u_{n+2} \sigma$ since $u_{n+1}=t$.
On the other hand, since $b= \beta r t$, and $\sigma = r/t$, this means that
$\beta r t = -u_{n+1} r/t$ and so $\beta t^2 = -u_{n+1}$.
Thus, $t^2 | u_{n+1} | t$.  But $t$ is a positive integer, so this means $t=1$.
Combining with the conclusion $b=-t^2$ from the previous paragraph, we finally have that $b=-1$.
\end{proof}


\begin{corollary}
Let $b$, $\ell$, and $\s$ satisfy the hypothesis of Theorem \ref{thm:case2}.
If $\gcd(\ell,b)=\gcd(\ell^2,b)$ then 
when $b > 0$, 
$\CC^{(\s)}_n$ is not Gorenstein for $n \geq 5$ 
and when $b < -1$,
$\CC^{(\s)}_n$ is not Gorenstein for $n \geq 6$. 
\end{corollary}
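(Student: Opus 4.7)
The plan is to apply the necessary divisibility condition~(\ref{divides_condition}) from the proof of Theorem~\ref{thm:case2} at specific small values of $n$. By Proposition~\ref{t_eq_1_note}, the hypothesis $\gcd(\ell,b)=\gcd(\ell^2,b)$ gives $t=1$ and $\sigma=r$, with $\ell=r\gamma$, $b=r\beta$, and $\gcd(\gamma,\beta)=\gcd(r,\beta)=1$. Lemma~\ref{gcd_lemma_1} then yields the clean formula $g_n:=\gcd(s_n,s_{n-1})=r^{\lfloor(n-1)/2\rfloor}$, so (\ref{divides_condition}) specializes to $\frac{s_n}{r^{(n-2)/2}}\mid r+b$ when $n$ is even and to $\frac{s_n}{r^{(n-1)/2}}\mid 1+b$ when $n$ is odd. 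If either divisibility fails at some $n$, then $\CC^{(\s)}_{n+1}$ is not Gorenstein, and Corollary~\ref{cor:onenenough} propagates the failure to all larger indices.

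For $b>0$ (so $\beta\geq 1$), I test the $n=4$ condition. Both $s_4/r=r\gamma(r\gamma^2+2\beta)$ and $r+b=r(1+\beta)$ are positive, and since $r,\gamma\geq 1$ and $\beta\geq 1$, the inequality $r\gamma^3+2\beta\gamma\geq 1+2\beta>1+\beta$ is immediate. Hence $s_4/r>r+b>0$, the divisibility fails, $\CC^{(\s)}_5$ is not Gorenstein, and the claim for all $n\geq 5$ follows.

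For $b<-1$, write $\beta=-m$ with $m\geq 1$ and set $x:=r\gamma^2$, so that the discriminant condition $\ell^2+4b\geq 0$ translates into $x\geq 4m$. I then test the $n=5$ condition: $s_5/r^2=x^2-3mx+m^2$ must divide $1+b=1-rm$, and $1+b\neq 0$ since $b\neq -1$. Using $r\leq x$, the difference $\tfrac{s_5}{r^2}-|1+b|$ is bounded below by $x^2-4mx+m^2+1$, which equals $m^2+1$ at the endpoint $x=4m$ and is strictly increasing in $x$ thereafter. Hence $s_5/r^2>|1+b|$, divisibility fails, $\CC^{(\s)}_6$ is not Gorenstein, and the claim for $n\geq 6$ follows.

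The main subtlety lies in the $b<-1$ case: the lower bound $x\geq 4m$ coming from the discriminant hypothesis is exactly what is needed to keep the quadratic $x^2-4mx+m^2$ positive at its worst point, and one really does have to push to $n=5$, because when $\beta=-1$ the quantity $r+b$ vanishes and the $n=4$ condition is then trivially satisfied for any divisor. In both cases one must also verify that the relevant right-hand side ($r+b$ or $1+b$) is nonzero before concluding that a strict inequality of absolute values implies the failure of divisibility.
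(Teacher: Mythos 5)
Your proposal is correct and follows essentially the same route as the paper: both specialize the divisibility condition~(\ref{divides_condition}) using $g_n=r^{\lfloor (n-1)/2\rfloor}$ from Lemma~\ref{gcd_lemma_1}, test it at $n=4$ for $b>0$ and at $n=5$ for $b<-1$ (where the discriminant hypothesis $\ell^2+4b\geq 0$ supplies the needed lower bound on $s_5$), verify the right-hand side is nonzero, and propagate the failure via Corollary~\ref{cor:onenenough}. The only difference is cosmetic: you carry out the size estimates in the variables $r,\gamma,\beta$ rather than directly in $\ell$ and $b$.
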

\begin{proof}
If $\gcd(\ell,b)=\gcd(\ell^2,b)$ then, by Lemma \ref{gcd_lemma_1}, for $n \geq 1$,
$g_n = \gcd(s_n,s_{n-1}) = r^{\lfloor (n-1)/2 \rfloor}$.

First assume $b > 0$.  If $\CC_5^{(\s)}$ is Gorenstein, then by \eqref{divides_condition} in the proof of Theorem \ref{thm:case2},
\[
s_4 \ \Big | \ g_5 + bg_4 \  = \  r^2 + br \  = \  r(r+b).
\]
Since $b > 0$, $r+b \not = 0$, so $s_4 \leq r(r+b)$.
However, 
$$r(r+b) \  \leq \  \ell(\ell + b)  \ = \  \ell^2 + \ell b \  < \  \ell^3 + 2b\ell \  = \  s_4.$$
Thus, $\CC_5^{(\s)}$ is not Gorenstein and therefore, by Corollary \ref{cor:onenenough}, $\CC_n^{(\s)}$ is not Gorenstein for any $n \geq 5$.

Now assume $b < -1$.  
By \eqref{divides_condition}, 
If $\CC_6^{(\s)}$ is Gorenstein, then 
\[
s_5 \ \Big | \ g_6 + bg_5 \  = \  r^2 + br^2 \  = \  r^2(1+b).
\]
Since $b \not = -1$, this implies $s_5 \leq |r^2(1+b)|$.
However, $s_5 = \ell^4 + 3b\ell^2 + b^2$ and recall that we assume $\ell^2+4b \geq 0$.
Then, since $b < 0$, we have
\[
s_5 \ = \ \ell^2(\ell^2 + 4b) - b\ell^2 + b^2 \ \geq \ -b\ell^2 = \  |b|\ell^2.
\]
On the other hand,
\[
|r^2(1+b)| \ = \ r^2(|b|-1) \ < \ r^2|b| \  \leq \  \ell^2|b|,
\]
a contradiction.
Thus, $\CC_6^{(\s)}$ is not Gorenstein and therefore, by Corollary \ref{cor:onenenough}, $\CC_n^{(\s)}$ is not Gorenstein for any $n \geq 6$.
\end{proof}
The bounds on $n$ are tight in the corollary.  
For $\ell=b=1$, $\CC_4^{(\s)}$ is Gorenstein, with Gorenstein point $c=(1,2,7,12)$.
For $\ell=5=-b$, $\CC_5^{(\s)}$ is Gorenstein, with Gorenstein point $c=(1,6,25,94,345)$.



\section{Ehrhart theory and symmetric $h^*$-vectors}\label{ehrhartsection}

Another consequence of our main result involves the $h^*$-vector of the
polytope $\RR_n^{(\s)}$ associated with the $\s$-lecture hall partitions, introduced in Section~\ref{overviewgorcone}.

Let 
\[
 i \left( \RR_n^{(\s)}, t \right) \ = \left | \left\{ \la \in \LL_n^{(\s)}  : \, \la_n \leq t \right\} \right | .
\]
The function $i \left( \RR_n^{(\s)}, t \right)$ is known to be a {\em quasi-polynomial} in $t$, i.e., it has the form $a_n(t) \, t^n + a_{ n-1 }(t) t^{ n-1 } + \dots + a_0(t)$, where $a_0(t), a_1(t), \dots, a_n(t)$ are periodic functions of $t$; the lcm of their periods is the \emph{period} of $\RR_n^{(\s)}$
 \cite{ccd,ehrhartpolynomial}.
The {\em Ehrhart series} of $\RR_n^{(\s)}$ is the series
\[
{\mathcal{E}}_n^{(\s)}(x) = 
\sum_{t \geq 0} i \left( \RR_n^{(\s)}, t \right) x^t.
\]
It was shown in \cite{PS} that
\begin{equation*}
{\mathcal{E}}_n^{(\s)}(x)\ = \ 
\frac{Q_n^{(\s)}(x)}{(1-x^{s_n})^{n+1}} \, ,
\end{equation*}
where
$Q_n^{(\s)}(x)$
is a polynomial with positive integer coefficients which can be interpreted in terms of statistics on ``$\s$-inversion sequences''.
Furthermore, from \cite[eq. (17)]{PS}, $Q_n^{(\s)}(1)= s_n(s_1s_2 \cdots s_n)$.
The coefficient sequence of $Q_n^{(\s)}(x)$ is referred to as the
 $h^*$-vector of the polytope $\RR_n^{(\s)}$.  For example,
\[
{\mathcal{E}}_n^{(1,3,5)}(x) = \frac{1+ 2x+ 4x^2+ 6x^3+ 9x^4+ 10x^5+ 11x^6+ 10x^7+ 9x^8+
6x^9+ 4x^{10}+ 2x^{11}+ x^{12}}{(1-x^5)^4} \, ,
\]
so the  $h^*$-vector of the polytope $\RR_n^{(1,3,5)}$ is $[1, 2, 4, 6, 9, 10, 11, 10, 9, 6, 4, 2, 1]$.
(Actually, $Q_n^{(\s)}(x)$ and $(1-x^{s_n})^{n+1}$  always have common factors \cite[Corollary~3]{PS}, but we won't make use of this here.)

Which sequences $\s$ give rise to symmetric $h^*$-vectors?
Conjectures were made in \cite{PS} which we will resolve here.  


\begin{theorem} \label{hvector}
 For a sequence $\s$ of positive integers,
the $h^*$-vector of $\RR_n^{(\s)}$ is symmetric
if and only if 
  $\CC_n^{(\s)}$ is Gorenstein. 
\end{theorem}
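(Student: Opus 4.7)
The plan is to realize the Ehrhart generating function as the integer-point transform of an auxiliary Gorenstein-friendly cone, and then match the Gorenstein conditions directly by exploiting Lemma~\ref{lemma:shiny-gorenstein}.

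First I would introduce the full-dimensional pointed rational cone
\[
\tilde{\CC} = \left\{(\lambda,t) \in \reals^{n+1} : \lambda \in \CC_n^{(\s)}, \, \lambda_n \leq t \right\},
\]
which is the usual Ehrhart cone over $\RR_n^{(\s)}$ (since $\RR_n^{(\s)} = \tilde\CC \cap \{t=1\}$). The linear functional $w(\lambda,t)=t$ satisfies the three hypotheses of Theorem~\ref{thm:StanGor}: it takes nonnegative integer values on $\tilde\CC \cap \integers^{n+1}$; $w=0$ forces $\ve u = \ve 0$ (because $\lambda_n \leq 0$ together with the lecture-hall inequalities collapses everything to the origin); and each level set $w^{-1}(a) \cap \tilde\CC$ is finite. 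Hence Theorem~\ref{thm:StanGor} tells us that $\tilde\CC$ is Gorenstein iff
\[
f(x) = \sum_{t \geq 0} i(\RR_n^{(\s)}, t)\, x^t = \mathcal{E}_n^{(\s)}(x) = \frac{Q_n^{(\s)}(x)}{(1-x^{s_n})^{n+1}}
\]
is self-reciprocal.

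Next, a routine computation using $(1 - x^{-s_n})^{n+1} = (-1)^{n+1} x^{-s_n(n+1)}(1-x^{s_n})^{n+1}$ shows that self-reciprocity of $\mathcal{E}_n^{(\s)}(x)$ is equivalent to $Q_n^{(\s)}(1/x) = x^{-\deg Q_n^{(\s)}}\, Q_n^{(\s)}(x)$, i.e., to $Q_n^{(\s)}$ being palindromic, which is exactly the symmetry of the $h^*$-vector. So at this point the theorem reduces to showing $\tilde\CC$ is Gorenstein iff $\CC_n^{(\s)}$ is Gorenstein.

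For this last equivalence, note that $\tilde\CC$ is simple, cut out by the defining functionals $\alpha^1,\dots,\alpha^n$ of $\CC_n^{(\s)}$ together with $\alpha^{n+1}(\lambda,t) = t - \lambda_n$. Since $\alpha^{n+1}$ is surjective from $\integers^{n+1}$ onto $\integers$, its lattice generator is $q_{n+1}=1$. By Lemma~\ref{lemma:shiny-gorenstein}, $\tilde\CC$ is Gorenstein iff there exists $(\c,c_{n+1})\in\integers^{n+1}$ with $\alpha^j(\c) = q_j$ for $1\leq j\leq n$ and $c_{n+1}-c_n=1$. The first $n$ equations are precisely the recurrence of Corollary~\ref{cor:csrec} characterizing the Gorenstein property of $\CC_n^{(\s)}$, while the last one simply imposes $c_{n+1}=c_n+1$, which is automatically integral once $\c$ is. Thus the Gorenstein conditions of $\tilde\CC$ and $\CC_n^{(\s)}$ coincide, and the chain of equivalences is complete. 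The only potential obstacle is verifying that the upper facet inequality $t-\lambda_n\geq 0$ contributes exactly $q_{n+1}=1$ to Lemma~\ref{lemma:shiny-gorenstein}, ensuring that no extra integrality constraint beyond those from Corollary~\ref{cor:csrec} is introduced; this is immediate from the surjectivity observation above.
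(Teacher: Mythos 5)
Your argument is correct, but it is not the route the paper's own proof takes; it is essentially the alternative sketched in the remark immediately following the theorem. The paper works entirely inside the $n$-dimensional cone $\CC_n^{(\s)}$: it applies Theorem~\ref{thm:StanGor} with the weight $w(\la)=\la_n$, so that $\CC_n^{(\s)}$ is Gorenstein iff $f(x)=\sum_{\la\in\LL_n^{(\s)}}x^{\la_n}$ is self-reciprocal, and then relates this to the Ehrhart series by the telescoping identity $f(x)=(1-x)\,\mathcal{E}_n^{(\s)}(x)$, using that $1/(1-x)$ is itself self-reciprocal. You instead lift to the $(n+1)$-dimensional cone over $\RR_n^{(\s)}$, apply Theorem~\ref{thm:StanGor} there with $w(\la,t)=t$, and then prove that the lifted cone is Gorenstein iff $\CC_n^{(\s)}$ is, by feeding the extra facet $t-\la_n\geq 0$ (with lattice generator $q_{n+1}=1$) into Lemma~\ref{lemma:shiny-gorenstein} and observing that the only new condition, $c_{n+1}=c_n+1$, is automatically integral. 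Both proofs rest on the same two pillars (Theorem~\ref{thm:StanGor} and the identification of the Ehrhart series with an integer-point transform), so the difference is one of packaging: the paper's version is shorter and avoids introducing a second cone, while yours is more geometric and has the side benefit of exhibiting the Gorenstein point of the cone over $\RR_n^{(\s)}$ explicitly as $(c_1,\dots,c_n,c_n+1)$ --- exactly the observation the authors record in their remark. Your verifications of the hypotheses of Theorem~\ref{thm:StanGor} for $w(\la,t)=t$ and of the palindromicity reduction for $Q_n^{(\s)}$ are sound, so I see no gap.
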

\begin{proof}
Note that the $h^*$-vector of $\RR_n^{(\s)}$ is symmetric if and only if $Q_n^{(\s)}(x)$ is self-reciprocal, which occurs if and only if
$\mathcal{E}_n^{(\s)}(x)$ is self-reciprocal.

The weight  function $w(\lambda)= \lambda_n$ satisfies the conditions of Theorem~\ref{thm:StanGor}, so $\CC_n^{(\s)}$ is Gorenstein if and only if
the function $f(x)$, defined by
\[
f(x) = \sum_{\la \in \LL_n^{(\s)}} x^{\la_n},
\]
is self reciprocal.  To relate $f(x)$ to $\mathcal{E}_n^{(\s)}(x)$,
\begin{align*}
f(x) & = \sum_{\la \in \LL_n^{(\s)}} x^{\la_n}
  = \sum_{t \geq 0}  \sum_{\la \in \LL_n^{(\s)}; \, \la_n=t} x^{t}\\
& = 1 +  \sum_{t \geq 1} x^t   \left (i \left( \RR_n^{(\s)}, t \right)- i \left( \RR_n^{(\s)}, t-1 \right)\right)\\
& = (1-x)  \sum_{t \geq 0} x^t i \left( \RR_n^{(\s)}, t \right).
\end{align*}
Thus, $\mathcal{E}_n^{(\s)}(x)= f(x)/(1-x)$, which is self-reciprocal if and only if $f(x)$ is.
\end{proof}

\begin{remark}
For the geometrically inclined, Theorem~\ref{hvector} also follows from the fact that $\CC_n^{(\s)}$ is Gorenstein if and only if the cone over $\RR_n^{(\s)}$ is Gorenstein (see \cite[Section 3.1]{ccd} for the definition of a cone over a polytope).
This follows easily from the observation that if $(c_1,\ldots,c_n)$ is the Gorenstein point for $\CC_n^{(\s)}$, then $(c_1,\ldots,c_n,c_n+1)$ is the Gorenstein point for the cone over $\RR_n^{(\s)}$. 
\end{remark}

Combining Theorems~\ref{hvector} and \ref{genBMEsr}, we have:

\begin{theorem}
Let $\s=(s_1, s_2, \ldots, s_n)$ be a sequence of positive integers such that $\gcd(s_i,s_{i+1})=1$
for $1 \leq i < n$.  Then the $h^*$-vector of $\RR_n^{(\s)}$ is symmetric if and only if
$\s$ is $\u$-generated by a sequence
of positive integers.
\label{genBMEhvector}
\end{theorem}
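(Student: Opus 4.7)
The statement is a direct corollary of two results already established in the excerpt, namely Theorem~\ref{hvector} and Theorem~\ref{genBMEsr}, so my plan is simply to chain the two equivalences they provide, with a brief check that the hypotheses line up.

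First I would invoke Theorem~\ref{hvector}, which applies to any sequence $\s$ of positive integers. It tells us that the $h^*$-vector of $\RR_n^{(\s)}$ is symmetric if and only if the lecture hall cone $\CC_n^{(\s)}$ is Gorenstein. No extra assumption on $\s$ is needed at this step, so in particular the hypothesis $\gcd(s_i,s_{i+1})=1$ plays no role here; it will only be used in the next step.

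Next I would apply Theorem~\ref{genBMEsr} to the hypothesis at hand. Since we assume that $\gcd(s_i,s_{i+1})=1$ for $1 \leq i < n$, Theorem~\ref{genBMEsr} states precisely that $\CC_n^{(\s)}$ is Gorenstein if and only if $\s$ is $\u$-generated by some sequence $\u=(u_1,\dots,u_{n-1})$ of positive integers. Concatenating the two ``if and only if'' statements yields the desired equivalence: the $h^*$-vector of $\RR_n^{(\s)}$ is symmetric iff $\CC_n^{(\s)}$ is Gorenstein iff $\s$ is $\u$-generated.

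There is no genuine obstacle here — the entire content of the corollary is packaged in the two earlier theorems, so the proof is at most a couple of lines citing them and remarking that the coprimality hypothesis is exactly what is needed to pass from Gorensteinness to the recursive $\u$-generation description (and conversely). If desired, one could also note the bonus that Theorem~\ref{genBMEsr} explicitly describes the Gorenstein point $\c$ via $c_1=1$, $c_2=u_1$, and $c_{i+1}=u_i c_i - c_{i-1}$, but this is not required for the corollary itself.
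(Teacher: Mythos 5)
Your proposal is correct and is exactly how the paper obtains this result: the theorem is stated immediately after the phrase ``Combining Theorems~\ref{hvector} and \ref{genBMEsr}, we have,'' so the intended proof is precisely the concatenation of those two equivalences, with the coprimality hypothesis feeding into Theorem~\ref{genBMEsr}. Nothing further is needed.
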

For example,  in Section 2.3, the ``$1 \bmod k$'' sequences are $\u$-generated by
$\u=(k+2,2,2,2,\ldots)$, so by Theorem~\ref{genBMEhvector}, the $h^*$-vector of $\RR_n^{(\s)}$ is symmetric,
confirming \cite[Conjecture 5.5]{PS}.

Combining Theorems~\ref{hvector} and \ref{ell-Gorenstein}, we have the following.

\begin{theorem} \label{ell-conjecture}
Let $\ell > 0$ and $b \not = 0$ be integers satisfying $\ell^2+4b \geq 0$.
Let $\s$ be defined by the recurrence
$$s_n = \ell s_{n-1} + b s_{n-2},$$
with initial conditions $s_1=1$, $s_0=0$. The  $h^*$-vector of $\RR_n^{(\s)}$ is symmetric
 for all $n \geq 0$ if and only if $b=-1$.
If $b \not = -1$, there is an integer $n_0 = n_0(b,\ell)$ so that for all $n \geq n_0$,
the $h^*$-vector of $\RR_n^{(\s)}$ is not symmetric.
\end{theorem}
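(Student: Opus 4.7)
The plan is to obtain this statement as an immediate corollary of the two preceding theorems, since the theorem combines a characterization of symmetric $h^*$-vectors in terms of the Gorenstein condition with a characterization of the Gorenstein condition for second-order recurrences.

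First, I would invoke Theorem~\ref{hvector}, which asserts that for any sequence $\s$ of positive integers, the $h^*$-vector of $\RR_n^{(\s)}$ is symmetric if and only if the $\s$-lecture hall cone $\CC_n^{(\s)}$ is Gorenstein. This reduces the question about the $h^*$-vector of $\RR_n^{(\s)}$ to a question about the Gorenstein property of $\CC_n^{(\s)}$.

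Next, under the hypothesis that $\s$ satisfies the recurrence $s_n = \ell s_{n-1} + b s_{n-2}$ with $s_0=0$, $s_1=1$, $\ell>0$, $b\neq 0$, and $\ell^2+4b\geq 0$, Theorem~\ref{ell-Gorenstein} (= Theorem~\ref{thm:lseqgorconj}) gives exactly the desired dichotomy at the level of cones: $\CC_n^{(\s)}$ is Gorenstein for every $n\geq 0$ if and only if $b=-1$, and when $b\neq -1$ there is an integer $n_0 = n_0(b,\ell)$ such that $\CC_n^{(\s)}$ is not Gorenstein for all $n\geq n_0$.

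Combining these two equivalences yields the full statement: the ``for all $n$'' direction follows directly, and the ``for all $n\geq n_0$'' failure statement for $b\neq -1$ follows from applying the equivalence of Theorem~\ref{hvector} at each such $n$. There is no genuine obstacle here; the proof is a one-line composition of the two results, and the content lies entirely in Theorems~\ref{hvector} and~\ref{ell-Gorenstein} proved earlier.
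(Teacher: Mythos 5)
Your proposal is correct and matches the paper exactly: the paper derives this theorem by the same one-line composition, stating it as an immediate consequence of combining Theorem~\ref{hvector} with Theorem~\ref{ell-Gorenstein}. Nothing further is needed.
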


The first conclusion in  Theorem~\ref{ell-conjecture} proves \cite[Conjectures 5.6 and 5.7]{PS}.

\section{Concluding remarks}





%
%
%

We summarize the main contributions of this paper and offer two related problems.

For partition theory,  our main contibution is the negative result that among the sequences $\s$ defined by $s_n=\ell s_{n-1} + bs_{n-1}$ with initial conditions $s_0=0,s_1=1$, if $\s$ is not an $\ell$-sequence, the generating function for the $\s$-lecture hall
partitions can be of the form $((1-q^{e_1}) \cdots (1-q^{e_n}))^{-1}$ for at
most finitely many $n$.

We proved a geometric result showing that $\ell$-sequences are unique among sequences defined by second-order linear recurrences.
Before this, we knew special things about $\ell$-sequences:  they generalized the integers ($\ell=2$);
they  give rise to an $\ell$-generalization of Euler's Partition Theorem \cite{BME2,SY};  they give rise to an $\ell$-nomial coefficient which has a $q$-analogue with a meaningful interpretation \cite{LS}.
But until now, we did not have negative results to distinguish $\ell$-sequences from other second order linear recurrences.

The theory of Gorenstein cones provided a direct translation of results about lecture hall partitions into results about lecture hall cones.  
Our results provide many new examples of Gorenstein cones, in particular,  the $\s$-lecture hall cones where $\gcd(s_{i+1},s_1)=1$ and $\s$ is $\u$-generated.
In Ehrhart theory, we proved that the $h^*$-vector of the rational $\s$-lecture hall polytope is symmetric when $\s$ is an $\ell$-sequence.
And, except for finitely many $n$, it is not for any other second order sequence.



For $\ell$-sequences, the Gorenstein point is
\[ (s_1,s_1+s_2,s_2+s_3,\ldots,s_{n-1}+s_n) \]
while the generating function for $\s$-lecture hall partitions is
\[
\sum_{\lambda\in L_n^{(\s)}}q^{|\lambda|}=\frac{1}{(1-q^{s_1})(1-q^{s_1+s_2})\cdots (1-q^{s_{n-1}+s_n})}.
\]
Can we explain why the coordinates of the Gorenstein point and the  exponents in the generating function are the same?
Is there an explanation at all?  Or is this just coincidence?

Is it true that for every sequence $\s$ of positive integers, the $h^*$-vector of the rational $s$-lecture hall polytope is unimodal (as, e.g., for the ``1 mod $k$" sequences)?
We have tested many sequences $\s$, including ones that were not themselves monotone and found no counterexamples.
The polynomials $Q_n^{(\s)}(x)$   do not, in general, have all roots real.


\bibliographystyle{plain}
\bibliography{square}

\end{document}